\documentclass[a4paper,12pt]{amsart}
\usepackage[dvips]{graphics,color}
\usepackage{multicol}
\usepackage{amsmath,latexsym,amsbsy,amssymb}
\usepackage{psfrag,graphicx,}
\usepackage{epsf} 
\usepackage{enumerate}
\usepackage{amsthm}
\usepackage{pifont,bbding}
\usepackage[latin1]{inputenc}

\newtheorem{Theorem}{Theorem}[section]
\newtheorem{Definition}[Theorem]{Definition} 
\newtheorem{Proposition}[Theorem]{Proposition}
\newtheorem{Corollary}[Theorem]{Corollary}
\newtheorem{Rem}[Theorem]{Remark}
\newtheorem{Lemma}[Theorem]{Lemma}

\numberwithin{equation}{section}

\newcommand{\R}{{\mathbb R}}
\linespread{1.2}
\setlength{\topmargin}{-0.2in}
\setlength{\oddsidemargin}{0.1in}
\setlength{\evensidemargin}{0.1in}
\setlength{\textwidth}{6in}
\setlength{\rightmargin}{0.0in}
\setlength{\leftmargin}{0in}
\setlength{\textheight}{9in}


\def\R{\mathbb R}
\def\S{\mathcal{S}}

\begin{document}
\title[Time optimal control for differential inclusions]{Exterior sphere condition and time optimal control for differential inclusions }
\author[Piermarco Cannarsa]{Piermarco Cannarsa}
\address[Piermarco Cannarsa]{Universit\`a di Roma 'Tor Vergata', Dipartimento di Matematica, Via della Ricerca Scientica 1, 00133 Roma, Italy}
\email{cannarsa@mat.uniroma2.it}

\author[Khai T. Nguyen]{Khai T. Nguyen}
\address[Khai T. Nguyen]{Universit\`a di Padova, Dipartimento di Matematica Pura ed Applicata, via Trieste 63, 35121 Padova, Italy}
\email{khai@math.unipd.it}

\thanks{This research was partially supported by the GDRE ConEDP issued by CNRS, INdAM and Universit\'e de Provence. Part of this work was completed during the C.I.M.E. course `Control of partial differential equations' (Cetraro, July 19--23, 2010), and the trimester on `Control of partial differential equations and applications' held at Institute Henri-Poincar\'e (Paris, Oct--Dec 2010). The authors wish to express their gratitude to the above institutions for their support and hospitality.}

\keywords{Proximal normal vectors, exterior sphere condition, differential inclusions, time optimal control, semiconcave functions.}

\subjclass[2000]{Primary: 34A60, 49J53; Secondary: 49J15.}
\date{\today}

\begin{abstract} 
The minimum time function $T(\cdot)$ of smooth control systems is known to be locally semiconcave provided Petrov's controllability condition is satisfied. Moreover, such a regularity holds up to the boundary of the target under an inner ball assumption. We generalize this analysis to differential inclusions, replacing the above hypotheses with the continuity of $T(\cdot)$ near the target, and an inner ball property for the multifunction associated with the dynamics. In such a weakened set-up, we prove that the hypograph of $T(\cdot)$ satisfies, locally, an exterior sphere condition. As is well-known, this geometric property ensures most of the regularity results that hold for semiconcave functions,  without assuming $T(\cdot)$ to be Lipschitz.
\end{abstract}
\maketitle
\section{Introduction}
This paper is concerned with the time optimal control problem for the differential inclusion
\begin{equation}\label{System}
 \left\{\begin{array}{ll}
\dot{x}(t)\: \in \: F(x(t)) & a.e.\;t\ge 0\\
x(0)  \: = \:  x_0\in\R^n,
\end{array}\right.
\end{equation}
with a closed target $\mathcal{S}\subset \R^n$. Here, $F:\mathbb{R}^n\rightrightarrows\mathbb{R}^n$ is a convex-valued Lipschitz continuous multifunction describing the dynamics, and will be subject to further conditions in Hamiltonian form (see section~\ref{Pre} below).
For each trajectory $y^{x_0}(\cdot)$ of (\ref{System}), we denote by 
$\theta(y^{x_0}(\cdot)):=\inf\ \lbrace{t\geq 0\  |\  y^{x_0}(t)\in\mathcal{S}\rbrace}$
the transition time from $x_0$ to $\S$ along  $y^{x_0}(\cdot)$. Clearly, $\theta(y^{x_0}(\cdot))\in [0,\infty]$. 
The {\em minimum time} $T(x_0)$ to reach $\mathcal{S}$ from $x_0$ is defined by
\begin{equation}\label{Minimum time}
T(x_0):=\inf\ \lbrace{\theta(y^{x_0}(\cdot))\ |\ y^{x_0}(\cdot)\ \mathrm{is\ a\ trajectory\ of\ (\ref{System})}\rbrace}.
\end{equation}
Observe that, in general,  $T(\cdot):\R^n\to [0,\infty]$. The controllable set $\mathcal C$ consists of all points $x\in\R^n$ such that $T(x)$ is finite. 

The regularity of the minimum time function, which is related to the controllability of \eqref{System}, has been the subject of an extensive literature. Most papers study the case where $F$ is given with a parameterization, which means that $F$ has the form
\begin{equation}\label{C^1parameterization}
F(x)=\bigl\{f(x,u)~|~u\in U\bigr\}\qquad\forall x\in\R^n
\end{equation}
with $U\subseteq\R^m$ compact and $f:\R^n\times U\to \R^n$ satisfying

\begin{itemize}
\item $f$ is continuous in $(x,u)$, and
\item there exists $k>0$ so that 
\[
| f(x_0,u)-f(x_1,u)|\leq k|x_0-x_1|
\]
for all $u\in U$ and $x_0,x_1\in\R^n$.
\end{itemize}

On the contrary, fewer results are available for systems modeled by  differential inclusions such as \eqref{System}---an exception to that being \cite{WZ}, where a representation formula for the proximal subgradient of $T(\cdot)$ is recovered. Although Lipschitz multifunctions with convex values always admit parameterizations by Lipschitz functions (see \cite{aufr90} and \cite{O}), it is a  challenging open problem to determine which multifunctions $F$ admit parameterizations with {\em smooth} functions. Observe that a certain smoothness of the parameterization---essentially that $f(\cdot, u)$ be differentiable with $D_xf(\cdot, u)$ Lipschitz---is crucial in order to derive further regularity properties of the value function, such as {\it semiconcavity}, by known methods (see \cite{CH0}, and also  \cite{CS}). This fact explains why parameterization theorems have so far proved of little use for regularity purposes. 

Instead of searching for smooth parameterizations, in \cite{CW} the first author and Wolenski proposed an alternative strategy to obtain semiconcavity results for the value function of the {\em Mayer problem} for system \eqref{System}. Unlike previous approaches, the proof of \cite{CW} exploits the semiconvexity in $x$ of the Hamiltonian 
\begin{equation}\label{Hamiltonian}
H(x,p)=\sup_{v\in F(x)}\langle v,p\rangle\qquad (x,p)\in \R^n\times\R^n,
\end{equation}
as well as the nonsmooth maximum principle.

As for time optimal control problems, when $F$ is parameterized as in \eqref{C^1parameterization}  and  $D_xf(\cdot, u)$ is Lipschitz, the minimum time function is known  to be semiconcave on $\mathcal C\setminus\text{int}(\S)$ provided  $\S$ has the inner ball property, and  $T(\cdot)$ is dominated by the distance from $\S$  (see \cite{CS0}, and also \cite{CS}). The latter assumption is equivalent to {\em Petrov's controllability condition:} for some constant $\mu>0$ and all points $x\in\partial\S$,
\begin{equation}\label{eq:Petrov}
\min_{u\in U} \langle f(x,u),\nu\rangle \leq -\mu|\nu|
\end{equation}
for all proximal normal vectors $\nu$ to $\S$ at $x$. This is a strong hypothesis on the control system since, when $f(x,u)=u$, it is equivalent to the fact that $U$ contains an open neighborhood of the origin. Nevertheless, it is also necessary for the semiconcavity of $T(\cdot)$ up to the boundary of the target, because it is equivalent to the Lipschitz continuity of $T(\cdot)$ in a neighborhood of $\S$---a direct consequence of semiconcavity. On the other hand, the assumption that $\S$ satisfies a uniform interior sphere condition\footnote{In this paper, the expressions ``$\S$ has the inner ball property''  and ``$\S$ satisfies a uniform interior sphere condition'' have the same meaning.} can be removed by the method introduced in \cite{CH}, where the local semiconcavity of $T(\cdot)$ in $\mathcal C\setminus\S$ is derived from an analogous geometric property of  $f(x,U)$.
Moreover, the approach of  \cite{CW} to obtain the  semiconcavity of the value function, can be adapted to the minimum time function for \eqref{System}, keeping Petrov's condition and the inner ball property of $\S$ as standing assumptions (see \cite{CaMW}).

The main purpose of this paper is to study the regularity of $T(\cdot)$ for the general system \eqref{System}, assuming neither the inner ball property of $\S$ nor Petrov's condition \eqref{eq:Petrov}. In view of the above discussion, the expected regularity of $T(\cdot)$ will be weaker than semiconcavity, since the Lipschitz continuity of $T(\cdot)$ near $\S$ is no longer guaranteed. Similarly, the lack of the inner ball property of $\S$ will result in the fact  that the regularity of $T(\cdot)$ will be just local in $\mathcal C\setminus\S$.

A weaker class of functions enjoying most of the differential properties of semiconcave functions, is the class of all continuous function $u$ whose hypograph is $\varphi$-convex, see, e.g., \cite{Ca}, \cite{CM}. For any function $u$ of the above class, Colombo and Marigonda~(\cite{CM}) proved the existence of second order Taylor's expansions a.e., showed the local BV regularity of the gradient, and studied the structure of  the corresponding singular set. For parameterized control systems and targets satisfying an internal sphere condition, Colombo and the second author (\cite{CK}) obtained $\varphi$-convexity results for the hypograph of $T(\cdot)$, assuming that $T(\cdot)$ is continuous and the proximal normal cone to the hypograph of $T(\cdot)$ is pointed at every point.
Furthermore, the second author removed the pointedness assumption in \cite{K} proving that, for any function $u$ in the above class, the points where the proximal normal cone to the hypograph of $u$ is not pointed form a closed set with zero Lebesgue measure. Consequently, in the above $\varphi$-convex set-up, $T(\cdot)$ retains most of the regularity properties of semiconcave functions.

Therefore,  the main results of our paper can also be described as the extension of the analysis of \cite{CK,K} to systems given in the general form \eqref{System}, replacing the geometric assumption on  $\S$ with a different property of the dynamics.  This goal will be achieved in two steps. First, we will show that, if $T(\cdot)$ is continuous, then the hypograph of $T(\cdot)$  satisfies an  exterior sphere condition provided $\S$ has the inner ball property (Theorem~\ref{Exterior-sphere-hypo}). Then, we shall remove the assumption that $\S$ has such a property adapting the approach of \cite{CH} to system \eqref{System}, that is, showing that the sub-level sets 
$\{T(\cdot)\le t\}$ acquire the inner ball property for $t>0$ small enough, provided $F(\cdot)$ satisfies the same geometric property (see Theorem~\ref{ATT} and Corollary~\ref{ATS} below).


The outline of this paper is the following. In section~\ref{notation}, we introduce our notation and recall some results from nonsmooth analysis. Our standing assumptions on $F$ and $H$ are described in Section~\ref{Pre}, together with basic properties of the Hamiltonian and estimates for solutions to certain differential inclusions. Finally, Section~\ref{main} contains our main results and their proofs. 
\section{Notation and preliminary results.}\label{notation}
Let $Q\subset\mathbb{R}^n$ be a closed set and let $x\in Q $. We say that a vector $v\in\mathbb{R}^n$ is \textit{a proximal normal} to $Q$ at $x$ (and write $v\in N^P_{Q}(x)$) if there exists $\sigma\geq 0$ such that
\begin{equation}\label{r-Q}
\langle v,y-x\rangle\ \leq\ \sigma |y-x|^2\quad\mathrm{for\ all}\ y\in Q\,.
\end{equation}
Equivalently, $v\in N^P_{Q}(x)$ if and only if there exists $\lambda >0$ such that $d_Q(x+\lambda v)=\lambda|v|$, where $d_Q(\cdot)$ denotes the distance function from $Q$. 
We say that $v\in N^P_Q(x)$ is \textit{realized by a ball of radius $\rho>0$}, if (\ref{r-Q}) holds true for $\sigma=\frac{|v|}{2\rho}$.

 The Clarke normal cone to $Q$ at $x$, $N^C_Q(x)$, is defined as 
\begin{eqnarray*}
N^C_Q(x)=\overline{\mathrm{co}}\lbrace{\xi\in\mathbb{R}^n\ |\ \exists x_i\rightarrow x,\; \xi_i\rightarrow \xi,\; \xi_i\in N^P_Q(x_i)\rbrace},
\end{eqnarray*}
where $\overline{\mathrm{co}}$ denotes the closed convex hull.

Now, suppose $f:\mathbb{R}^n\rightarrow (-\infty,+\infty]$ is lower semicontinuous and denote by
\[
\mathrm{epi}(f)=\lbrace{(x,\beta)\ |\ \beta\geq f(x)\rbrace}
\]
its (closed) epigraph. For any $x\in\mathrm{dom}(f):=\lbrace{x\ |\ f(x)<\infty\rbrace}$, the set of proximal subgradients of $f$ at $x$,  $\partial_Pf(x)$, equals
\[
\partial_Pf(x)=\lbrace{\xi\in\mathbb{R}^n\ |\ (\xi,-1)\in N^P_{\mathrm{epi}(f)}(x,f(x))\rbrace}.
\] 
Similarly,  the set of Clarke subgradients of $f$ at $x$, $\partial f(x)$, is given by  
\begin{equation*}
\partial f(x)=\lbrace{\xi\in\mathbb{R}^n\ |\ (\xi,-1)\in N^C_{\mathrm{epi}(f)}(x,f(x))\rbrace}.
\end{equation*}

The property of {\it semiconcavity} has several characterizations  (see, e.g., \cite{CS}), and we now proceed to recall some of them. For  a convex set $K\subseteq\R^n$, a function $f:K\to\R$, and a constant $c\geq 0$, the following properties are equivalent:
\begin{itemize}
\item[$(1)$]  for all $x_0,\,x_1\in K$ and $0\leq\lambda\leq 1$, one has
\[
(1-\lambda)f(x_0)+\lambda f(x_1)-f(x_{\lambda})\leq c\lambda(1-\lambda)|x_1-x_0|^2,
\]
where $x_{\lambda}=(1-\lambda)x_0+\lambda x_1$,
\item[$(2)$]   $f$ is continuous and, for all $x\in K$ and $z\in\R^n$ with $x\pm z\in K$, one has
\[
f(x+z)+f(x-z)-2f(x)\leq 2c|z|^2\,,
\]
\item[$(3)$]  the map $x\mapsto f(x)-c|x|^2$ is concave.
\end{itemize}
In any of the above cases, we say that $f$ is (linearly) semiconcave on $K$ with constant $c$, and we call $f$ semiconvex on $K$ if $-f$ is semiconcave.

The following lemma is proved in \cite{CW}.
\begin{Lemma}\label{le2.1}
Suppose $f:\mathbb{R}^n\times\mathbb{R}^m\rightarrow\mathbb{R}$ is Lipschitz in $(x,y)$ on a boxed neighborhood $U_x\times U_y=\lbrace{(x,y)\ |\ \max\lbrace{|x-\bar{x}|,|y-\bar{y}|\rbrace}<\delta\rbrace}$ of the point $(\bar x,\bar y)\in \mathbb{R}^n\times\mathbb{R}^m$, and that for each $y\in U_y$, the function $x\rightarrow f(x,y)$ is semiconvex on $U_x$ with constant independent of $y$. Then, for any $\xi=(\xi_x,\xi_y)\in\partial f(\bar{x},\bar{y})$, one has $\xi_x\in\partial_x f(\bar{x},\bar{y})$.
\end{Lemma}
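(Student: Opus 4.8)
The plan is to derive the lemma from a single inequality between Clarke generalized directional derivatives, with the uniform (in $y$) semiconvexity in $x$ doing all the work. I would write $g:=f(\cdot,\bar y)$ and recall that $\xi=(\xi_x,\xi_y)\in\partial f(\bar x,\bar y)$ means $\langle(\xi_x,\xi_y),(v,w)\rangle\le f^{\circ}(\bar x,\bar y;(v,w))$ for all $(v,w)\in\R^n\times\R^m$, where $f^{\circ}$ is the generalized directional derivative, while $\xi_x\in\partial_x f(\bar x,\bar y)=\partial g(\bar x)$ means $\langle\xi_x,v\rangle\le g^{\circ}(\bar x;v)$ for all $v\in\R^n$. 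Choosing $w=0$ in the first characterization, the statement reduces to proving
\[
f^{\circ}\bigl(\bar x,\bar y;(v,0)\bigr)\ \le\ g^{\circ}(\bar x;v)\qquad\text{for every }v\in\R^n .
\]

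To establish this, fix $v\in\R^n$ and a small $s>0$ with $\bar x+sv$ well inside $U_x$. For $(x,y)\in U_x\times U_y$ near $(\bar x,\bar y)$ and $0<t<s$, the semiconvexity of $f(\cdot,y)$ on $U_x$ (equivalently, convexity of $x\mapsto f(x,y)+c|x|^{2}$), applied to the three collinear points $x$, $x+tv=(1-\tfrac ts)x+\tfrac ts(x+sv)$, $x+sv$, yields, after dividing by $t$,
\[
\frac{f(x+tv,y)-f(x,y)}{t}\ \le\ \frac{f(x+sv,y)-f(x,y)}{s}+c\,s\,|v|^{2}.
\]
Then I would take $\limsup$ as $(x,y)\to(\bar x,\bar y)$ and $t\downarrow0$: the left-hand side is exactly $f^{\circ}(\bar x,\bar y;(v,0))$, and, since $s$ is held fixed and $f$ is continuous (indeed Lipschitz), the right-hand side tends to $\dfrac{g(\bar x+sv)-g(\bar x)}{s}+c\,s\,|v|^{2}$. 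Letting now $s\downarrow0$, the difference quotient converges to the one-sided directional derivative $g'(\bar x;v)$, which exists because $g$ is semiconvex, and the error term vanishes, so $f^{\circ}(\bar x,\bar y;(v,0))\le g'(\bar x;v)$. Finally, semiconvex functions are Clarke regular (see, e.g., \cite{CS}), hence $g'(\bar x;v)=g^{\circ}(\bar x;v)$, and the displayed inequality follows.

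I do not anticipate a genuine obstacle: the argument is essentially a one-dimensional convexity estimate along the direction $v$, propagated through the $\limsup$. The only care needed is bookkeeping — keeping $x$, $x+tv$, $x+sv$ inside the box $U_x$ on which semiconvexity is available (automatic once $x$ is close to $\bar x$ and $s,t$ are small), and performing the two limit passages in the correct order ($s$ fixed while $(x,y,t)\to(\bar x,\bar y,0)$, then $s\downarrow0$). If one prefers to avoid directional derivatives, the same conclusion follows via Rademacher's theorem: $\partial f(\bar x,\bar y)$ is the convex hull of limits $\lim_k\nabla f(x_k,y_k)$ over differentiability points $(x_k,y_k)\to(\bar x,\bar y)$; for each such point the $x$-block of $\nabla f(x_k,y_k)$ is $\nabla_x f(x_k,y_k)$, which by uniform semiconvexity satisfies $f(x,y_k)\ge f(x_k,y_k)+\langle\nabla_x f(x_k,y_k),x-x_k\rangle-c|x-x_k|^{2}$ for all $x\in U_x$; passing to the limit and using that $\partial_x f(\bar x,\bar y)$ is closed and convex gives $\xi_x\in\partial_x f(\bar x,\bar y)$.
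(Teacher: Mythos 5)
The paper does not actually prove this lemma: it is quoted from the reference \cite{CW}, so there is no in-paper argument to compare against. Your proof, however, is correct and self-contained. The reduction of the claim to the inequality $f^{\circ}\bigl((\bar x,\bar y);(v,0)\bigr)\le g^{\circ}(\bar x;v)$ with $g=f(\cdot,\bar y)$ is exactly the right move (note that the paper defines $\partial f$ through the normal cone to the epigraph, but for Lipschitz $f$ this coincides with the generalized gradient characterized by $\langle\xi,d\rangle\le f^{\circ}(\cdot;d)$, which is what you use), and the one-dimensional convexity estimate for $x\mapsto f(x,y)+c|x|^{2}$ along $x$, $x+tv$, $x+sv$ gives precisely the uniform bound needed to pass to the $\limsup$ with $s$ frozen and then let $s\downarrow0$; the constant $c\,s\,|v|^{2}$ is correct since the exact error is $c(s-t)|v|^{2}$. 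Two small remarks: the appeal to Clarke regularity of semiconvex functions is superfluous, because you only need $g'(\bar x;v)\le g^{\circ}(\bar x;v)$, which holds for any Lipschitz function whenever the one-sided derivative exists; and your alternative route via Rademacher's theorem (limits of gradients at differentiability points, combined with the lower estimate $f(x,y_k)\ge f(x_k,y_k)+\langle\nabla_xf(x_k,y_k),x-x_k\rangle-c|x-x_k|^{2}$ and convexity/closedness of $\partial_xf(\bar x,\bar y)$) is also sound and is closer in spirit to the kind of argument used in \cite{CW}. Either version is acceptable; the directional-derivative version is the more elementary of the two.
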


We now  introduce a concept which plays a major role in the sequel. 
\begin{Definition}\label{External}
Let $Q\subset\mathbb{R}^n$ be closed and let $\theta(\cdot):\partial Q\rightarrow (0,\infty)$ be continuous. We say that $Q$ satisfies the $\theta(\cdot)$-exterior sphere condition if and only if, for every $x\in\partial Q$, there exists a nonzero vector $v_x\in N^P_Q(x)$ 
which is realized by a ball of radius $\theta(x)$. 
\end{Definition}
\noindent
Denote by $Q'$ the closure of the complement of $Q$. We say that the set $Q$ satisfies the \textit{$\theta(\cdot)$-interior sphere condition} if $Q'$ satisfies the  $\theta(\cdot)$-exterior sphere condition. If $Q$ satisfies the $\theta(\cdot)$-interior sphere condition for some constant function  $\theta(\cdot)=\theta_0$, then we also say that $Q$ has the {\em inner ball property} (of radius $\theta_0$).\\
\indent Finally, for any function $f:\Omega\rightarrow\mathbb{R}$  denote by
\[
\mathrm{hypo}(f)=\lbrace{(x,\beta)\ |\ \beta\leq f(x)\rbrace}
\]
the hypograph of $f$. 
Let $\theta:\Omega\rightarrow (0,\infty)$ be a continuous function.
Adapting Definition~\ref{External}, we say $\mathrm{hypo}(f)$ satisfies the $\theta(\cdot)$-exterior sphere condition if for every $x\in\Omega$ there exists a nonzero vector $v_x\in N^P_{\mathrm{hypo}(f)}(x,f(x))$ 
which is realized by a ball in $\mathbb{R}^{n+1}$ of radius $\theta(x)$.
We now recall a result from \cite{K}.
\begin{Proposition}\label{hypoexternal}
Let $\Omega\subset\mathbb{R}^N$ be an open set and let $f:\Omega\rightarrow\mathbb{R}$ be continuous. Assume that $hypo(f)$ satisfies the $\theta(\cdot)$-exterior sphere condition, where $\theta:\Omega\rightarrow (0,\infty)$ is a given continuous function. Then there exists a sequence of sets $\Omega_h\subseteq\Omega$, with  $\Omega_h$ compact in ${\rm dom}(f)$, such
that:
\begin{enumerate}
\item[(1)] the union of $\Omega_h$ covers $\mathcal L^N$-almost all of ${\rm dom}(f)$,
\item[(2)] for all $x\in\cup_h\Omega_h$ there exist $\delta=\delta(x)>0$ and $L=L(x)>0$ such that
$f$  is Lipschitz on $B(x,\delta)$
with constant $L$, hence semiconcave on $B(x,\delta)$.
\end{enumerate}
Consequently,
\begin{enumerate}
\item[(3)] $f$  is a.e. Fr\'echet differentiable and admits a second order Taylor expansion
around a.e. point of its domain.
\end{enumerate}
\end{Proposition}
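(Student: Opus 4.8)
Write $\mathcal{H}:=\mathrm{hypo}(f)\subset\mathbb{R}^{N+1}$ and let $e:=(0,\dots,0,1)$; since $\Omega$ is open, $\mathrm{dom}(f)=\Omega$, so ``$\Omega_h$ compact in $\mathrm{dom}(f)$'' means $\overline{\Omega_h}\subset\Omega$. The starting point is that any $w\in N^P_{\mathcal{H}}(x,f(x))$ has $\langle w,e\rangle\ge 0$ (test \eqref{r-Q} against the points $(x,f(x))-se\in\mathcal{H}$, $s\to0^+$). Following \cite{K}, the plan is to isolate the ``bad set''
\[
\widetilde E:=\bigl\{\,x\in\Omega\ :\ \text{some nonzero }(p,0)\in N^P_{\mathcal{H}}(x,f(x)),\ p\in\mathbb{R}^{N},\ \text{is realized by a ball of positive radius}\,\bigr\},
\]
to prove that $\widetilde E$ is closed with $\mathcal{L}^N(\widetilde E)=0$, and that on the open set $\Omega\setminus\widetilde E$ the function $f$ is locally Lipschitz and locally semiconcave. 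Granting this, one takes $\{\Omega_h\}$ to be a compact exhaustion of $\Omega\setminus\widetilde E$; then (1) holds because $\widetilde E$ is null, (2) is the assertion about $\Omega\setminus\widetilde E$, and (3) follows from the fact that locally semiconcave functions are Fr\'echet differentiable and admit a second order Taylor expansion almost everywhere (see \cite{CS}), combined with $\mathcal{L}^N(\widetilde E)=0$.

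I would establish the statements about $\Omega\setminus\widetilde E$ as follows. First, $\widetilde E$ is closed: if $x_j\to x_0$, $x_j\in\widetilde E$, with unit horizontal normals $(p_j,0)$ realized by balls of radius $\theta(x_j)$, then along a subsequence $p_j\to p_0$ with $|p_0|=1$, and since $\theta(x_j)\to\theta(x_0)>0$ and $f(x_j)\to f(x_0)$, passing to the limit in \eqref{r-Q} (with $\sigma=|v|/2\theta$) gives $(p_0,0)\in N^P_{\mathcal{H}}(x_0,f(x_0))$ realized by a ball of radius $\theta(x_0)$, i.e.\ $x_0\in\widetilde E$. Next, if $f$ were not Lipschitz on any neighbourhood of some $x_0\in\Omega\setminus\widetilde E$, pick $y_j,z_j\to x_0$ with $f(z_j)-f(y_j)>M_j|y_j-z_j|$, $M_j\to\infty$; by closedness of $\widetilde E$ one may assume $y_j\notin\widetilde E$, so the exterior sphere condition at $(y_j,f(y_j))$ furnishes a unit normal $(p_j,\lambda_j)$ with $\lambda_j>0$, realized by a ball of radius $\theta(y_j)\to\theta(x_0)>0$. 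Since $(z_j,f(z_j))\in\mathcal{H}$ lies outside that ball, \eqref{r-Q} gives $\langle p_j,z_j-y_j\rangle+\lambda_j(f(z_j)-f(y_j))\le\frac{1}{2\theta(y_j)}\bigl(|z_j-y_j|^2+(f(z_j)-f(y_j))^2\bigr)$; dividing by $f(z_j)-f(y_j)>0$ and using $|p_j|=1$, $f(z_j)-f(y_j)>M_j|y_j-z_j|$, and $f(z_j)-f(y_j)\to0$ (continuity of $f$), one finds $\lambda_j\to0$. Passing to a limit of $(p_j,\lambda_j)$ then produces a nonzero horizontal ball-realized normal at $(x_0,f(x_0))$, so $x_0\in\widetilde E$, a contradiction; hence $f$ is locally Lipschitz on $\Omega\setminus\widetilde E$. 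Local semiconcavity then follows because, where $f$ is $L$-Lipschitz, every $(p,\lambda)\in N^P_{\mathcal{H}}(y,f(y))$ satisfies $|p|\le L\lambda$ (test \eqref{r-Q} against $(y+tu,f(y+tu))$, $t\to0^+$); thus the ball-realized normal has $\lambda$ locally bounded below, and its exterior ball yields a $C^2$ upper barrier $\psi_y\ge f$ with $\psi_y(y)=f(y)$ and $\|D^2\psi_y\|$ locally bounded uniformly in $y$, so that $f(y+z)+f(y-z)-2f(y)\le\psi_y(y+z)+\psi_y(y-z)-2\psi_y(y)\le C|z|^2$, i.e.\ $f$ is semiconcave.

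The crux is $\mathcal{L}^N(\widetilde E)=0$. For $x\in\widetilde E$ normalise the horizontal ball-realized normal to $(p_x,0)$, $|p_x|=1$, with exterior ball of radius $\theta(x)$ and centre $(x+\theta(x)p_x,\,f(x))$. For any unit $v$ with $\gamma:=\langle v,p_x\rangle>0$ and any $t\in(0,2\theta(x)\gamma)$ the point $(x+tv,f(x))$ lies in the \emph{open} excluded ball whereas $(x+tv,f(x+tv))\in\mathcal{H}$ lies outside it, and these two facts give
\[
f(x+tv)\ \le\ f(x)-\sqrt{2\theta(x)\gamma\,t-t^2}\,,\qquad\text{hence}\qquad \lim_{t\to0^+}\frac{f(x+tv)-f(x)}{t}=-\infty .
\]
Fixing a countable dense set $\{v_k\}\subset S^{N-1}$ and noting that every $x\in\widetilde E$ has $\langle v_k,p_x\rangle>0$ for some $k$, we obtain $\widetilde E\subseteq\bigcup_k\widetilde E_k$ with $\widetilde E_k:=\{x\in\Omega:\ \limsup_{t\to0^+}(f(x+tv_k)-f(x))/t=-\infty\}$. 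For each $k$, after rotating $v_k$ to $e_1$, Fubini reduces $\mathcal{L}^N(\widetilde E_k)=0$ to the one-dimensional statement that for continuous $g$ the set $\{s:\limsup_{t\to0^+}(g(s+t)-g(s))/t=-\infty\}$ is Lebesgue-null; this is contained in the exceptional null set of the Denjoy--Young--Saks theorem, since none of its four almost-everywhere alternatives allows an upper right Dini derivative equal to $-\infty$. Hence each $\widetilde E_k$, and therefore $\widetilde E$, is $\mathcal{L}^N$-null, which completes the argument.

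The main obstacle is thus the third paragraph: converting a purely \emph{horizontal} exterior ball into the square-root descent estimate (which genuinely uses that the interior of the ball misses $\mathcal{H}$, not just tangency), and then feeding the resulting Dini-derivative information into a slicewise application of Denjoy--Young--Saks. A lesser but still non-trivial point is the dichotomy of the second paragraph, where the continuity of both $f$ and $\theta$ is used to force a horizontal ball-realized normal in the limit whenever local Lipschitz continuity fails.
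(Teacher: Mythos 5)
The paper itself contains no proof of Proposition~\ref{hypoexternal}: it is quoted verbatim from \cite{K}, so there is no internal argument to compare yours with. Your self-contained proof follows the same general philosophy as \cite{K,CK} (isolate the set of points admitting a horizontal, ball-realized proximal normal to the hypograph, show it is Lebesgue negligible, and show that away from it $f$ is locally Lipschitz, hence semiconcave via an exterior-ball barrier), and its substantive steps check out: the square-root descent estimate coming from a horizontal exterior ball, which forces the upper right Dini derivative to be $-\infty$ in every direction $v$ with $\langle v,p_x\rangle>0$ (and, usefully, works for a ball of \emph{any} positive radius, so your parenthetical use of $\theta(x)$ there is harmless); the reduction through a countable dense set of directions and Fubini to the one-dimensional Denjoy--Young--Saks theorem, none of whose a.e.\ alternatives allows an upper right Dini derivative equal to $-\infty$; the limiting argument showing that failure of local Lipschitz continuity at $x_0$ produces, via $\lambda_j\to0$, a horizontal unit normal at $(x_0,f(x_0))$ realized by a ball of radius $\theta(x_0)$; and the bound $|p|\le L\lambda$ plus the $C^2$ barrier giving semiconcavity wherever $f$ is Lipschitz.

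The one step that does not hold as written is the closedness of $\widetilde E$. With your definition, $x\in\widetilde E$ only provides a horizontal normal realized by a ball of \emph{some} positive radius $r_x$ (equivalently, a nonzero horizontal proximal normal), not of radius $\theta(x)$; in your limiting argument the radii $r_{x_j}$ may shrink to $0$, so no positive-radius ball need survive in the limit and the closedness proof collapses. Fortunately closedness of $\widetilde E$ is not actually needed. Either redefine $\widetilde E$ by requiring realization by a ball of radius $\theta(x)$ --- then your limit argument is correct by continuity of $f$ and $\theta$, and both the nullity argument and the Lipschitz-failure argument (which produces a normal realized by a ball of radius exactly $\theta(x_0)$) still land in this smaller set --- or, more simply, work with the set $E^*$ of points at which $f$ is Lipschitz on no neighbourhood: $E^*$ is relatively closed in $\Omega$ by its very definition, your second-paragraph argument shows $E^*\subseteq\widetilde E$, hence $\mathcal L^N(E^*)=0$, and a compact exhaustion of the open set $\Omega\setminus E^*$ gives (1)--(3). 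Two cosmetic points: in the Lipschitz-failure argument you need neither $y_j\notin\widetilde E$ nor $\lambda_j>0$ (only $\lambda_j\ge0$, which is automatic for hypograph normals), and the correct bound is $|p_j|\le1$ rather than $|p_j|=1$.
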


\section{Hypotheses and some consequences}\label{Pre}
We list below our hypotheses on $F$ and $H$---the problem data introduced in \eqref{System} and \eqref{Hamiltonian}, repectively---together with some of their consequences.\\ 
\noindent {\bf Hypotheses (F):}
\begin{enumerate}
\item[(F1)] $F(x)$ is nonempty, convex, and compact for each $x\in\mathbb{R}^n$.
\item[(F2)] $F$ is Lipschitz continuous with respect to the Hausdorff distance. Thus, if $K$ is the Lipschitz constant of $F$, then $K|p|$ is the Lipschitz constant of $H(\cdot,p)$, i.e.,
\begin{equation}\label{H-Lipschitz}
|H(y,p)-H(x,p)|\ \leq\ K|p||y-x|\qquad\forall x,y\in\R^n\,,\;\forall p\in\R^n\,.
\end{equation}
\end{enumerate}

\noindent {\bf Hypotheses (H):}
\begin{enumerate}
\item[(H1)] There exists a constant $c_0\geq 0$ such that $x\mapsto H(x,p)$ is semiconvex with semiconvexity constant $c_0|p|$.
\item[(H2)] For all $p\neq 0$, the gradient $\nabla_pH(\cdot,p)$ exists and is globally Lipschitz, i.e., 
\begin{equation}\label{global}
| \nabla_pH(x,p)-\nabla_pH(y,p)|\ \leq\ K_1 |y-x |\qquad\forall x,y\in\R^n\,,\;\forall p\in\R^n\setminus\{0\}\,,
\end{equation}
for some constant $K_1\ge 0$.
\end{enumerate}
\begin{Rem}\rm
In particular, (F2) implies that 
\begin{enumerate}
\item[(F3)] $\exists \,K_2>0$ such that $\max \lbrace{|v|\ |\ v\in F(x)\rbrace}\leq K_2(1+|x|)$\,,
\end{enumerate}
which in turn guarantees that solutions to \eqref{System} are defined on $[0,\infty)$.

Global  Lipschitz continuity in both (F2) and (H2) was assumed just to simplify computations. Indeed, our results still hold if  $F$ is locally Lipschitz with respect to the Hausdorff distance, and $\nabla_pH(\cdot,p)$ is locally Lipschitz in $x$, uniformly so over $p$ in $\mathbb{R}^n\backslash \lbrace{0\rbrace}$.
In that case, however, (F3) has to be assumed as an extra condition.  
\end{Rem}
From Lemma~\ref{le2.1} we immediately obtain the following.
\begin{Corollary}\label{Div}
Suppose $H$ satisfies assumption (H1). Then 
$$\partial H(x,p)\subseteq\partial_xH(x,p)\times\partial_pH(x,p)\qquad\forall p\neq 0\,.$$
\end{Corollary}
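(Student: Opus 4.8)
The plan is to derive the inclusion directly from Lemma~\ref{le2.1}, applied twice: once with the first group of variables being $x$, and once with the first group of variables being $p$. Fix $p\neq 0$ and let $(\xi_x,\xi_p)\in\partial H(x,p)$; we must check that $\xi_x\in\partial_xH(x,p)$ and $\xi_p\in\partial_pH(x,p)$.

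For the first membership, apply Lemma~\ref{le2.1} to $f=H$, with the variables $(x,y)$ of the lemma playing the roles of $(x,p)$ here. The hypotheses are satisfied on a suitable box around $(x,p)$: by (F2) and the definition \eqref{Hamiltonian}, together with the bound in (F3), one has $|H(x',p')-H(x'',p'')|\le K|p'|\,|x'-x''|+\sup_{v\in F(x'')}|v|\,|p'-p''|$, and both coefficients stay bounded for $(x',p')$, $(x'',p'')$ near $(x,p)$, so $H$ is Lipschitz in $(x,p)$ on that box; moreover, assumption (H1) says that for each fixed $p'$ the map $x'\mapsto H(x',p')$ is semiconvex with constant $c_0|p'|$, which is bounded independently of $p'$ over the box. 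Lemma~\ref{le2.1} then yields $\xi_x\in\partial_xH(x,p)$.

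For the second membership, apply Lemma~\ref{le2.1} again, now to the function $(p',x')\mapsto H(x',p')$, i.e.\ with the two groups of variables interchanged. The same estimate above shows this function is Lipschitz in $(p',x')$ near $(p,x)$, and since the Clarke subdifferential transforms covariantly under the linear isometry that swaps coordinates, $(\xi_p,\xi_x)$ is a Clarke subgradient of it at $(p,x)$. The decisive point is that, for each fixed $x'$, the map $p'\mapsto H(x',p')=\sup_{v\in F(x')}\langle v,p'\rangle$ is a supremum of linear functions, hence convex, i.e.\ semiconvex with constant $0$, a constant trivially independent of $x'$. Thus Lemma~\ref{le2.1} gives $\xi_p\in\partial_pH(x,p)$, and combining the two inclusions proves the claim.

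There is no substantial obstacle here; the only point requiring a line of care is that Lemma~\ref{le2.1} is stated on a neighborhood, so one must note that the relevant Lipschitz and semiconvexity moduli remain bounded as the base point ranges over a small box, which is immediate from (F2), (F3), and (H1). Note also that (H1) is used only for the $x$-component of the subgradient; the $p$-component is handled automatically because $H$ is convex in $p$.
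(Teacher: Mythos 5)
Your argument is correct and is exactly the route the paper intends: the corollary is stated there as an immediate consequence of Lemma~\ref{le2.1}, applied in the $x$-variable via the semiconvexity assumption (H1) and in the $p$-variable via the convexity of $p\mapsto H(x,p)$ as a support function, with the local Lipschitz continuity of $H$ in $(x,p)$ coming from (F2)--(F3). Your write-up merely makes explicit the variable swap and the boundedness of the moduli on a box, which is exactly the ``immediate'' verification the paper leaves to the reader.
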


\indent The following proposition is a consequence of \cite[Proposition 1] {CW}.
\begin{Proposition}\label{PW}
Suppose $F$ satisfies (F) and (H1). Then\\
(1)\ for each $x,z\in\mathbb{R}^n$, we have
\[
H(x+z,p)+H(x-z,p)-2H(x,p)\geq -c_0|p||z|^2;\ \mathrm{and}
\]   
(2)\ for each $x,y\in\mathbb{R}^n$, and $\xi\in\partial_xH(x,p)$, we have 
\[
H(y,p)-H(x,p)-\langle\xi,y-x\rangle\geq-c_0|p||y-x|^2.
\]
\end{Proposition}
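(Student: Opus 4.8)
The plan is to derive both inequalities from hypothesis (H1) by means of the characterizations of semiconvexity recalled in Section~\ref{notation}. Note first that the statement is trivial for $p=0$: there $H(\,\cdot\,,0)\equiv 0$ and $\partial_x H(x,0)=\{0\}$, so both inequalities read $0\ge 0$. Thus from now on I fix $p\neq 0$ and write $c:=c_0|p|$.

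Assertion (1) is precisely the second-difference characterization of the semiconvexity of $x\mapsto H(x,p)$ with constant $c$, which is guaranteed by (H1); it is also the content of \cite[Proposition~1]{CW}. Granting (1), I record the elementary remark that the function
\[
g(x)\ :=\ H(x,p)+\frac{c}{2}\,|x|^2
\]
is convex on $\R^n$. Indeed, using the identity $|x+z|^2+|x-z|^2-2|x|^2=2|z|^2$, the second difference of $g$ at $(x,z)$ equals the second difference of $H(\,\cdot\,,p)$ at $(x,z)$ plus $c|z|^2$, hence is $\ge 0$ by (1); since $g$ is continuous (recall $H(\,\cdot\,,p)$ is Lipschitz by \eqref{H-Lipschitz}), non-negativity of all its second differences forces $g$ to be convex.

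For assertion (2), fix $x\in\R^n$ and $\xi\in\partial_x H(x,p)$. Since $H(\,\cdot\,,p)=g-\frac{c}{2}|\,\cdot\,|^2$ and $x\mapsto\frac{c}{2}|x|^2$ is of class $C^1$, the Clarke subdifferential is additive here and it coincides with the convex subdifferential of the convex function $g$; therefore $\partial_x H(x,p)=\partial g(x)-c\,x$, so that $\eta:=\xi+c\,x\in\partial g(x)$. The subgradient inequality for the convex function $g$ then gives $g(y)-g(x)\ge\langle\eta,y-x\rangle$ for every $y\in\R^n$. Rewriting this in terms of $H$ and using $|y|^2-|x|^2=|y-x|^2+2\langle x,y-x\rangle$ to cancel the terms linear in $y-x$, one obtains
\[
H(y,p)-H(x,p)-\langle\xi,y-x\rangle\ \ge\ -\frac{c}{2}\,|y-x|^2\ \ge\ -c_0|p|\,|y-x|^2,
\]
which is (2).

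The only step that is not completely routine is the exact identity $\partial_x H(x,p)=\partial g(x)-c\,x$: it relies on the additivity of the Clarke subdifferential under the addition of a $C^1$ function and on the fact that, for the convex function $g$, the Clarke subdifferential reduces to the classical convex subdifferential. Everything else is bookkeeping, the only care needed being the normalization of the semiconvexity constant, so that the constant in the final estimates does not exceed $c_0|p|$. Since the proposition is stated as a consequence of \cite[Proposition~1]{CW}, one may equally well quote (1) and (2) from there, the computation above merely making the deduction explicit.
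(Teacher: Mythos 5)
Your argument is correct, but it is worth pointing out that the paper offers no proof at all for this statement: it simply records it as a consequence of \cite[Proposition 1]{CW}, where both inequalities (in exactly this second-difference/subgradient form) are established. What you add is a self-contained derivation of (2) from (1): you pass to the convexified function $g(x)=H(x,p)+\tfrac{c}{2}|x|^2$, observe that (1) makes $g$ midpoint convex, hence convex by continuity (local Lipschitz continuity of $H(\cdot,p)$ comes from \eqref{H-Lipschitz}), and then use the exact Clarke sum rule with the $C^1$ perturbation $\tfrac{c}{2}|x|^2$ together with the identification of the Clarke and convex subdifferentials for convex functions to get $\partial_x H(x,p)=\partial g(x)-cx$; the convex subgradient inequality then yields (2), in fact with the better constant $\tfrac{c_0|p|}{2}$. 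All of these steps are standard and sound, so your proof is a legitimate alternative to the citation, and it has the merit of making transparent why (1) implies (2). The one caveat is the normalization you yourself flag: under the conventions recalled in Section~\ref{notation}, ``semiconvex with constant $c_0|p|$'' gives second differences bounded below by $-2c_0|p||z|^2$, not $-c_0|p||z|^2$, so strictly speaking (1) is not ``precisely'' that characterization; the proposition (following \cite{CW}) adopts the convention in which (H1) is expressed directly by the inequality in (1). This is an inconsistency inherited from the paper rather than a flaw in your reasoning, and with either convention your deduction of (2) from (1) goes through unchanged, only the numerical constant being affected by a factor of $2$.
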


The differentiability statement in assumption (H2) is equivalent to the argmax set of $v\mapsto\langle v,p\rangle$
being a singleton, which equals $\nabla_pH(x,p)$ and will also be denoted by $F_p(x)$. In view of (H2), for all $p\neq 0$ the function $F_p(\cdot)$ is globally Lipschitz with constant $K_1$. 
The main use of (H2) is given by the following result whose proof is straightforward.
\begin{Proposition}\label{T-IVP}
Assume (F) and (H), and let $p(\cdot)$ be an absolutely continuous arc on $[0,T]$, with $p(t)\neq 0$ for all $t\in [0,T]$. Then, for each $x\in\mathbb{R}^n$, the problem
\begin{equation}\label{IVP}
 \left\{\begin{array}{ll}
\dot{x}(t)\: = \: F_{p(t)}(x(t)) & a.e.\ t\in [0,T] \\
x(0)  \: = \:  x
\end{array}\right.
\end{equation}
has a unique solution $y(\cdot,x)$. Moreover, $x\mapsto y(t,x)$ is Lipschitz on $\mathbb{R}^n$  and
\begin{equation}\label{Lip-y}
|y(t,z)-y(t,x)|\ \leq\ e^{K_1t}|z-x|\qquad\forall x,z\in \mathbb{R}^n\,,\;\forall t\in[0,T]\,.
\end{equation}
\end{Proposition}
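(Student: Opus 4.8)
The plan is to check that the vector field on the right-hand side of \eqref{IVP}, namely $(t,\xi)\mapsto F_{p(t)}(\xi)$, satisfies the standard hypotheses guaranteeing existence and uniqueness of an absolutely continuous solution, and then to deduce \eqref{Lip-y} from Gronwall's inequality.

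First I would record the regularity of $(t,\xi)\mapsto F_{p(t)}(\xi)$. Since $H(\xi,\cdot)$ is convex (being a supremum of linear functions) and, by (H2), differentiable on $\R^n\setminus\{0\}$ with $\nabla_pH(\xi,p)=F_p(\xi)$, and since the gradient of a finite convex function is automatically continuous at every point where it exists, the map $p\mapsto F_p(\xi)$ is continuous on $\R^n\setminus\{0\}$ for each fixed $\xi$. As $p(\cdot)$ is absolutely continuous, hence continuous, and does not vanish on the compact interval $[0,T]$, the composition $t\mapsto F_{p(t)}(\xi)$ is continuous for every $\xi\in\R^n$. Next, $F_{p(t)}(\xi)\in F(\xi)$, so (F3) yields the linear growth bound $|F_{p(t)}(\xi)|\le K_2(1+|\xi|)$, uniformly in $t\in[0,T]$. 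Finally, (H2) shows that $\xi\mapsto F_{p(t)}(\xi)$ is globally Lipschitz with constant $K_1$, uniformly in $t\in[0,T]$, since $p(t)\neq 0$.

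With these three properties at hand, the classical Cauchy--Lipschitz theorem (the vector field being continuous in $t$ and uniformly Lipschitz in $\xi$) provides, for each $x\in\R^n$, a unique absolutely continuous local solution of \eqref{IVP}; the linear growth bound prevents finite-time blow-up, so the solution $y(\cdot,x)$ is in fact defined on all of $[0,T]$.

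It remains to establish \eqref{Lip-y}. Fix $x,z\in\R^n$ and set $\delta(t):=|y(t,z)-y(t,x)|$. Then $t\mapsto y(t,z)-y(t,x)$ is absolutely continuous, and for a.e.\ $t\in[0,T]$,
\[
\frac{d}{dt}\,\delta(t)\ \le\ \bigl|F_{p(t)}(y(t,z))-F_{p(t)}(y(t,x))\bigr|\ \le\ K_1\,\delta(t),
\]
by the uniform Lipschitz bound above. Integrating gives $\delta(t)\le|z-x|+K_1\int_0^t\delta(s)\,ds$, and Gronwall's inequality then yields $\delta(t)\le e^{K_1t}|z-x|$ for all $t\in[0,T]$, which is \eqref{Lip-y}; in particular $x\mapsto y(t,x)$ is Lipschitz on $\R^n$. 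The only step requiring a moment's thought---and the reason the argument is otherwise entirely routine---is the continuity in $t$ of the vector field, which rests on the continuity of the gradient of the convex function $H(\xi,\cdot)$ off the origin; once this is noted, existence, uniqueness and the Lipschitz estimate are standard ODE theory.
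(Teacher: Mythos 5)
Your argument is correct and is precisely the ``straightforward'' proof the paper alludes to without writing out: (H2) makes $\xi\mapsto F_{p(t)}(\xi)$ globally Lipschitz with constant $K_1$ uniformly in $t$ (since $p(t)\neq 0$), continuity of $p(\cdot)$ together with continuity of the gradient of the convex function $H(\xi,\cdot)$ off the origin gives continuity in $t$, (F3) gives linear growth, and then Picard--Lindel\"of plus Gronwall yield existence, uniqueness, and the estimate \eqref{Lip-y}. No gaps; this matches the intended routine ODE argument.
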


We conclude this section with some simple consequences of Gronwall's lemma. 
\begin{Lemma}\label{adjoint vector}
Let $G:[0,T]\times\mathbb{R}^n\rightrightarrows\mathbb{R}^n$ be an upper semicontinuous multifunction. Assume $G(t,\cdot)$ satisfies hypotheses (F1), (F2) uniformly in $t\in[0,T]$, and is such that, for some $K_0>0$, $$|v|\leq K_0|p|\qquad\forall v\in G(t,p)\,,\;\forall (t,p)\in [0,T]\times\mathbb{R}^n\,.$$
Let $p(\cdot)$ be a solution of the differential inclusion
\begin{equation}\label{System-1}
 \left\{\begin{array}{ll}
\dot{p}(t)\: \in \: G(t,p(t)) & a.e. \ t\in [0,T]\\
p(0)  \: = \:  p_0.
\end{array}\right.
\end{equation}
Then
\[
e^{-K_0t} |p(0)|\ \leq\ |p(t)|\ \leq\ e^{K_0t}|p(0)|\quad\forall t\in [0,T].
\] 
Moreover, for all  $0\leq t_1\leq t_2\le T$,
\begin{eqnarray*}
e^{-K_0(t_2-t_1)}|p(t_2)|\ \leq\ |p(t_1)|\ \leq\ e^{K_0(t_2-t_1)}|p(t_2)|
\end{eqnarray*}
and
\begin{eqnarray*}
|p(t_2)-p(t_1)|\ \leq\ K_0e^{K_0(t_2-t_1)}(t_2-t_1)|p(t_2)|.
\end{eqnarray*}
\end{Lemma}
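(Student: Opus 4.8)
The plan is to derive all three estimates from Gronwall's inequality applied to the scalar function $t\mapsto|p(t)|$. First I would observe that, since $p(\cdot)$ is absolutely continuous and $p(t)\in\mathbb{R}^n$, the function $t\mapsto|p(t)|$ is absolutely continuous, and at almost every $t$ where $p(t)\neq 0$ one has $\frac{d}{dt}|p(t)|=\langle \dot p(t),p(t)\rangle/|p(t)|$, hence $\bigl|\frac{d}{dt}|p(t)|\bigr|\le|\dot p(t)|\le K_0|p(t)|$ using the growth bound $|v|\le K_0|p|$ for $v\in G(t,p)$. At points where $p(t)=0$ the same bound $\bigl|\frac{d}{dt}|p(t)|\bigr|\le|\dot p(t)|\le K_0|p(t)|=0$ holds a.e.\ (one can argue via the a.e.\ differentiability of $|p(\cdot)|$ and the fact that $|\dot p(t)|\le K_0|p(t)|$ forces $\dot p(t)=0$ there). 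Thus $\bigl|\frac{d}{dt}|p(t)|\bigr|\le K_0|p(t)|$ a.e.\ on $[0,T]$.

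From this differential inequality, both one-sided bounds follow by the integral form of Gronwall's lemma: integrating $\frac{d}{dt}|p(t)|\le K_0|p(t)|$ gives $|p(t)|\le|p(0)|+K_0\int_0^t|p(s)|\,ds$, whence $|p(t)|\le e^{K_0t}|p(0)|$; integrating $-\frac{d}{dt}|p(t)|\le K_0|p(t)|$, i.e.\ $\frac{d}{dt}|p(t)|\ge -K_0|p(t)|$, similarly yields $|p(t)|\ge e^{-K_0t}|p(0)|$. This proves the first displayed chain of inequalities. The second chain, for general $0\le t_1\le t_2\le T$, is obtained in exactly the same way by running the argument on the subinterval $[t_1,t_2]$ — that is, treating $t_1$ as the initial time and $p(t_1)$ as the initial datum — or, equivalently, by a time-reversal $s\mapsto p(t_1+t_2-s)$, which still solves a differential inclusion of the same type; this gives $e^{-K_0(t_2-t_1)}|p(t_2)|\le|p(t_1)|\le e^{K_0(t_2-t_1)}|p(t_2)|$.

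For the last estimate I would write $p(t_2)-p(t_1)=\int_{t_1}^{t_2}\dot p(s)\,ds$, so that $|p(t_2)-p(t_1)|\le\int_{t_1}^{t_2}|\dot p(s)|\,ds\le K_0\int_{t_1}^{t_2}|p(s)|\,ds$. Then I bound $|p(s)|$ for $s\in[t_1,t_2]$ in terms of $|p(t_2)|$ using the second chain already established: $|p(s)|\le e^{K_0(t_2-s)}|p(t_2)|\le e^{K_0(t_2-t_1)}|p(t_2)|$. Substituting gives $|p(t_2)-p(t_1)|\le K_0 e^{K_0(t_2-t_1)}(t_2-t_1)|p(t_2)|$, as claimed.

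I do not anticipate a serious obstacle here — the statement is explicitly billed as a consequence of Gronwall's lemma. The only point requiring a little care is the a.e.\ differentiability of the scalar map $t\mapsto|p(t)|$ and the handling of the set where $p(t)=0$ (where $|\cdot|$ is not differentiable as a function on $\mathbb{R}^n$); this is dispatched by noting that the composition of the Lipschitz function $|\cdot|$ with the absolutely continuous arc $p(\cdot)$ is absolutely continuous, and that the chain-rule bound $\bigl|\frac{d}{dt}|p(t)|\bigr|\le|\dot p(t)|$ holds a.e.\ regardless. Everything else is a direct application of Gronwall and elementary integral estimates.
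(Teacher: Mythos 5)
Your argument is correct in substance, but it follows a slightly different route from the paper, and one step is stated imprecisely. The paper never differentiates the norm: it writes $p(t)=p(0)+\int_0^t\dot p(s)\,ds$, gets $|p(t)|\le|p(0)|+K_0\int_0^t|p(s)|\,ds$ directly from the triangle inequality, applies integral Gronwall for the upper bound, and obtains the lower bound by the time-reversal device $g(s)=p(t-s)$, which satisfies the \emph{same} integral inequality, so the upper bound for $g$ is exactly the lower bound for $p$; the remaining two estimates are then analogous (run on $[t_1,t_2]$), just as you do. You instead differentiate $t\mapsto|p(t)|$ and work with the two-sided differential inequality $\bigl|\tfrac{d}{dt}|p(t)|\bigr|\le K_0|p(t)|$ a.e.; this is fine (and your handling of the set $\{p(t)=0\}$ is adequate), but your claim that \emph{both} one-sided bounds follow from the ``integral form of Gronwall's lemma'' is not literally right: integrating $\tfrac{d}{dt}|p(t)|\ge -K_0|p(t)|$ gives $|p(t)|\ge|p(0)|-K_0\int_0^t|p(s)|\,ds$, which is not a Gronwall-type inequality and does not by itself yield $|p(t)|\ge e^{-K_0t}|p(0)|$. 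To close this you should either use the differential form with an integrating factor (note that $e^{K_0t}|p(t)|$ is absolutely continuous with a.e.\ nonnegative derivative, hence nondecreasing), or fall back on the paper's time-reversal trick, which reduces the lower bound to a genuine integral Gronwall estimate. With that one clarification, your proof is complete; the paper's version has the minor advantage of avoiding the chain rule for the norm (and hence any discussion of points where $p$ vanishes) altogether.
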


\begin{proof}
Since $p(t)=p(0)+\int_0^t\dot{p}(s)ds$, we have 
\begin{eqnarray*}
| p(t)|\leq |p(0)|+\int_0^t|\dot{p}(s)|ds \leq  |p(0)|+ K_0\int_0^t |p(s)|ds.
\end{eqnarray*}
So, using Gronwall's inequality, we get: $|p(t)|\leq e^{ K_0t}|p(0)|$.\\
\indent We are now going to prove that $e^{- K_0t}|p(0)|\leq|p(t)|$  for all $t>0$. Fixing $t>0$, we define $g(s):=p(t-s)$ for all $s\in [0,t]$. Since $\dot{g}(s)=-\dot{p}(t-s)$ for almost $s\in [0,t]$, we have $g(s)=g(0)+\int_0^s\dot{g}(\tau)d\tau$ for all $s\in [0,t]$. Thus,
\begin{eqnarray*}
|g(s)|&\leq & |g(0)|+\int_0^s|\dot{g}(\tau)|d\tau=|g(0)|+\int_0^s|\dot{p}(t-\tau)|d\tau\\
&\leq & |g(0)|+ K_0\int_0^s|p(t-\tau)|d\tau= |g(0)|+ K_0\int_0^s|g(\tau)|d\tau
\end{eqnarray*}
Again by Gronwall's inequality, we obtain $|g(s)|\leq e^{ K_0s}|g(0)|$ for all $s\in [0,t]$. In particular, $|g(t)|\leq e^{ K_0t}|g(0)|$. The proof is completed noting that $g(t)=p(0)$ and $g(0)=p(t)$. 
\end{proof}

\begin{Corollary}
Let $p(\cdot)$ be a solution of (\ref{System-1}). Then either $p(t)=0$ for all $t\in [0,T]$ or $p(t)\neq 0$ for all $t\in [0,T]$.
\end{Corollary}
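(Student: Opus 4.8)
The plan is to read off the dichotomy directly from the two-sided exponential bound established in Lemma~\ref{adjoint vector}, applied to the very solution $p(\cdot)$ of \eqref{System-1} at hand. Concretely, I would argue by contraposition: assume $p(t_0)=0$ for some $t_0\in[0,T]$ and show that then $p\equiv 0$ on $[0,T]$; this immediately yields that the two stated alternatives are exhaustive and mutually exclusive.

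Fix an arbitrary $t\in[0,T]$ and distinguish two cases according to the position of $t$ relative to $t_0$. If $t\ge t_0$, apply the inequality $e^{-K_0(t_2-t_1)}|p(t_2)|\le|p(t_1)|$ of Lemma~\ref{adjoint vector} with $t_1=t_0$, $t_2=t$, obtaining $e^{-K_0(t-t_0)}|p(t)|\le|p(t_0)|=0$, hence $p(t)=0$. If instead $t\le t_0$, apply the inequality $|p(t_1)|\le e^{K_0(t_2-t_1)}|p(t_2)|$ with $t_1=t$, $t_2=t_0$, obtaining $|p(t)|\le e^{K_0(t_0-t)}|p(t_0)|=0$, hence again $p(t)=0$. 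Since $t\in[0,T]$ was arbitrary, $p$ vanishes identically. Equivalently, if $p$ is not identically zero, then $p(t)\neq 0$ for every $t\in[0,T]$, which is precisely the assertion.

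I do not expect any genuine obstacle here: the statement is a formal consequence of the estimates already proved, and the only point worth a line of justification is the applicability of Lemma~\ref{adjoint vector}, which holds because $p(\cdot)$ solves \eqref{System-1} with the same multifunction $G$, so the standing hypotheses on $G$ required by that lemma (upper semicontinuity, (F1)--(F2) uniformly in $t$, and the linear growth $|v|\le K_0|p|$) are in force throughout $[0,T]$.
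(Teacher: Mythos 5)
Your argument is correct and is exactly the intended one: the paper states this corollary without proof precisely because it follows immediately from the two-sided estimate $e^{-K_0(t_2-t_1)}|p(t_2)|\le |p(t_1)|\le e^{K_0(t_2-t_1)}|p(t_2)|$ of Lemma~\ref{adjoint vector}, which is how you use it. Your case split around $t_0$ and the contrapositive formulation match the paper's implicit reasoning, so nothing further is needed.
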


\begin{Lemma}\label{Pos}
Let $y(\cdot,x_0)$ be a solution of (\ref{System}). Then, for all $t>0$, the following holds:\\
i)\ \ $|y(t,x_0)|\ \leq\ (|x_0|+1)e^{K_2t}-1$,\\
ii)\ $|y(t,x_0)-x_0|\ \leq (|x_0|+1)(e^{K_2t}-1)\leq K_2(|x_0|+1)e^{K_2t}t$.
\end{Lemma}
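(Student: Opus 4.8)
The plan is to reduce everything to a single application of Gronwall's inequality, using the linear growth bound (F3) on $F$ (which, as noted in the Remark, follows from (F2)). First I would observe that, since $y(\cdot,x_0)$ solves \eqref{System}, for a.e. $s>0$ one has $\dot y(s,x_0)\in F(y(s,x_0))$, hence by (F3)
\[
|\dot y(s,x_0)|\ \leq\ K_2\bigl(1+|y(s,x_0)|\bigr).
\]
Writing $y(t,x_0)=x_0+\int_0^t\dot y(s,x_0)\,ds$ and setting $\phi(t):=1+|y(t,x_0)|$, this yields
\[
\phi(t)\ \leq\ \phi(0)+\int_0^t|\dot y(s,x_0)|\,ds\ \leq\ \phi(0)+K_2\int_0^t\phi(s)\,ds .
\]

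Then I would apply Gronwall's lemma exactly as in the proof of Lemma~\ref{adjoint vector} to conclude $\phi(t)\le\phi(0)e^{K_2 t}$, that is, $1+|y(t,x_0)|\le(1+|x_0|)e^{K_2 t}$, which is precisely (i). For (ii), I would return to the integral representation and estimate directly, feeding in the bound on $\phi$ just obtained:
\[
|y(t,x_0)-x_0|\ \leq\ \int_0^t|\dot y(s,x_0)|\,ds\ \leq\ K_2\int_0^t\phi(s)\,ds\ \leq\ K_2(1+|x_0|)\int_0^t e^{K_2 s}\,ds\ =\ (1+|x_0|)\bigl(e^{K_2 t}-1\bigr).
\]
The last inequality in (ii) then follows from the elementary estimate $e^{a}-1=\int_0^a e^s\,ds\le a\,e^{a}$, valid for $a\ge 0$, applied with $a=K_2 t$.

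I do not expect any genuine obstacle here: the only points deserving (minor) care are the justification of the Gronwall step via the integral form of the differential inclusion, and the remark that (F3) is indeed at our disposal. Everything else is a routine computation, and the argument runs in close parallel to the proof of Lemma~\ref{adjoint vector}.
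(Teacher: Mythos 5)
Your proposal is correct and follows essentially the same route as the paper: the integral representation of $y(\cdot,x_0)$ combined with (F3) and Gronwall's inequality gives (i), and (ii) is obtained by feeding (i) back into the integral estimate. The substitution $\phi(t)=1+|y(t,x_0)|$ is just an explicit spelling-out of the Gronwall step that the paper performs implicitly.
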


\begin{proof}
Since
$$y(t,x_0)\ =\ x_0+\int_{0}^t\dot{y}(s,x_0)ds,$$
recalling (F3) we have
\[
|y(t,x_0)|\ \leq |x_0|+K_2t+K_2\int_{0}^t|y(s,x_0)|ds.
\]
Hence, Gronwall's inequality yields (i). Then, observing that
$$ |y(t,x_0)-x_0|\leq K_2\int_{0}^t(1+|y(s,x_0)|)ds,$$
(ii) follows using (i) in the above estimate.
\end{proof}
\section{Main results}\label{main}
\subsection{Part I} In this part, we will assume that $\mathcal{S}$ is nonempty, closed and has the inner ball property with balls of radius $\rho_0>0$. Moreover, assumptions (F) and (H) are also assumed throughout. Recall that $c_0, K, K_1, K_2$ are the constants in (H1), (F2), (H2), (F3) 
. Let us define, for any $r>0$, 
\begin{eqnarray*}
\mathcal{S}'(r)=\lbrace{x\ |\ T(x)\geq r\rbrace}\,,\;\;\; \mathcal{S}'=(\mathbb{R}^n\backslash\mathcal{S})\cup\partial\mathcal{S}\,,\;\;\;\mathcal{C}=\lbrace{x\in\mathbb{R}^n\backslash\mathcal{S}\ |\ T(x)<+\infty\rbrace}
\end{eqnarray*}
and $T_{|\mathcal{O}}:\mathcal{O}\rightarrow\mathbb{R}$ the restriction of $T$ to $\mathcal{O}$, i.e., $T_{|\mathcal{O}}(x)=T(x)$ for $x\in\mathcal{O}$.
\\
\indent\\
Our main results are the following theorem, together with the corollary.
\begin{Theorem}\label{Exterior-sphere-hypo}
Assume (F) and (H). Suppose further that $\mathcal{S}$ is nonempty, closed and has the inner ball property with balls of radius $\rho_0>0$ and $T(\cdot)$ is continuous in a open subset $\mathcal{O}$ of $\mathcal{C}$. Then, the hypograph of $T_{|\mathcal{O}}(\cdot)$ satisfies a $\rho_T(\cdot)$-exterior sphere condition for some continuous function  $\rho_T(\cdot):\mathcal{O}\rightarrow (0,\infty)$.
\end{Theorem}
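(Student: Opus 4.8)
The plan is to fix a point $\bar x\in\mathcal{O}$ and construct an optimal trajectory from $\bar x$ to $\mathcal{S}$, then propagate the interior sphere at the endpoint on $\partial\mathcal{S}$ back along this trajectory through the adjoint dynamics, producing the required proximal normal to $\mathrm{hypo}(T_{|\mathcal{O}})$ at $(\bar x,T(\bar x))$. More precisely, let $y(\cdot)$ be an optimal trajectory with $y(0)=\bar x$, $y(T(\bar x))=:z\in\partial\mathcal{S}$. Since $\mathcal{S}$ has the inner ball property of radius $\rho_0$, the set $\mathcal{S}'=(\R^n\setminus\mathcal{S})\cup\partial\mathcal{S}$ satisfies an exterior sphere condition at $z$: there is a unit vector $\nu\in N^P_{\mathcal{S}}(z)$ realized by a ball of radius $\rho_0$. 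Use the nonsmooth maximum principle for the minimum time problem to produce an absolutely continuous adjoint arc $p(\cdot)$ on $[0,T(\bar x)]$ satisfying $-\dot p(t)\in\partial_x H(y(t),p(t))$ a.e., with $p(T(\bar x))$ equal to (a positive multiple of) $\nu$, and with the maximality relation $\langle p(t),\dot y(t)\rangle = H(y(t),p(t))$; one also gets $H(y(t),p(t))\equiv$ const (and one normalizes so that this constant is, say, $1$, or tracks it carefully). By the Corollary after Lemma~\ref{adjoint vector}, since $p(T(\bar x))\neq 0$ the whole arc $p(\cdot)$ is nonvanishing, so $\dot y(t)=F_{p(t)}(y(t))$ a.e. by (H2), and all the estimates of Lemma~\ref{adjoint vector} apply (with $G(t,p)=-\partial_xH(y(t),p)$, which satisfies the needed bounds by (F2), (H1)).

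The heart of the argument is the estimate: the vector $(p(0),-1)$ — suitably scaled — is a proximal normal to $\mathrm{hypo}(T_{|\mathcal{O}})$ at $(\bar x,T(\bar x))$ realized by a ball whose radius $\rho_T(\bar x)$ depends continuously on $\bar x$. To see this, take any $(x,\beta)\in\mathrm{hypo}(T_{|\mathcal{O}})$ near $(\bar x, T(\bar x))$; we must bound $\langle p(0), x-\bar x\rangle + (-1)(\beta - T(\bar x))$ from above by $C(|x-\bar x|^2 + |\beta - T(\bar x)|^2)$. Since $\beta\le T(x)$ it suffices to control $\langle p(0), x-\bar x\rangle - (T(x)-T(\bar x))$. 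Here one runs the controlled flow $y(t,x)$ of \eqref{IVP} with the \emph{same} adjoint $p(\cdot)$ from the new initial point $x$; by Proposition~\ref{T-IVP} this flow is well-defined and $e^{K_1t}$-Lipschitz in the initial datum. At time $T(\bar x)$ the point $y(T(\bar x),x)$ is close to $z\in\partial\mathcal{S}$; using the exterior sphere property of $\mathcal{S}$ at $z$ one estimates the time needed to steer $y(T(\bar x),x)$ into $\mathcal{S}$, hence bounds $T(x)$ from above by $T(\bar x)$ plus a correction. Meanwhile the quantity $t\mapsto \langle p(t), y(t,x)-y(t)\rangle$ is differentiated: its derivative is $\langle \dot p(t), y(t,x)-y(t)\rangle + \langle p(t), F_{p(t)}(y(t,x)) - F_{p(t)}(y(t))\rangle$, and using $\dot p(t)\in -\partial_x H(y(t),p(t))$ together with the semiconvexity estimate of Proposition~\ref{PW}(2) — applied with $\xi\in\partial_xH(y(t),p(t))$ and the points $y(t,x), y(t)$ — one gets that this derivative is bounded below by $-c_0|p(t)||y(t,x)-y(t)|^2$ (the term $\langle p(t), F_{p(t)}(y(t,x))-F_{p(t)}(y(t))\rangle$ combines with $H(y(t,x),p(t))-H(y(t),p(t))-\langle\xi,\cdot\rangle$ since $F_p=\nabla_pH$ is the maximizer). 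Integrating and using the Lipschitz bound $|y(t,x)-y(t)|\le e^{K_1t}|x-\bar x|$ gives
\[
\langle p(0), x-\bar x\rangle \;\le\; \langle p(T(\bar x)), y(T(\bar x),x)-z\rangle + C_1|x-\bar x|^2
\]
for a constant $C_1$ depending on $c_0, K_1, T(\bar x), |p(\cdot)|$. Combining this with the target-side estimate (which converts $\langle \nu, y(T(\bar x),x)-z\rangle$ plus the distance of $y(T(\bar x),x)$ from $\mathcal{S}$ into a bound on $T(x)-T(\bar x)$, picking up the $\rho_0$ from the interior ball of $\mathcal{S}$) yields exactly the quadratic inequality \eqref{r-Q} for the vector $(p(0),-1)$ at $(\bar x, T(\bar x))$, with $\sigma$ — hence the ball radius $\rho_T(\bar x)$ — an explicit expression in $\rho_0, c_0, K_1, K_2, T(\bar x)$ and the bounds $e^{\pm K_0 T(\bar x)}|p(T(\bar x))|$ from Lemma~\ref{adjoint vector}. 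Continuity of $\rho_T(\cdot)$ then follows because all these quantities vary continuously (or can be bounded locally uniformly) in $\bar x$ — here one uses continuity of $T(\cdot)$ on $\mathcal{O}$ to control $T(\bar x)$ locally, and the a priori bounds of Lemma~\ref{Pos} on the optimal trajectories.

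The main obstacle I anticipate is twofold. First, the nonsmooth maximum principle must be invoked in a form valid for differential inclusions with only Lipschitz (not smooth) dynamics and delivering an adjoint with the correct transversality condition $p(T(\bar x))\in N^P_{\mathcal{S}}(z)$ realized by the $\rho_0$-ball — one has to be careful that the maximum principle gives a \emph{proximal}, ball-realized normal and not merely a limiting normal; this may force a preliminary argument approximating $\mathcal{S}$ by its inner balls, or a direct variational construction of $p(\cdot)$ via the value-function characterization rather than citing a black-box PMP. Second, closing the estimate requires that $p(\cdot)$ stay bounded away from $0$ with quantitative constants — supplied by Lemma~\ref{adjoint vector} once $p(T(\bar x))\neq 0$ is known — and that the "target-side" time estimate (steering a point near $z$ into $\mathcal{S}$ in time controlled by its signed distance to the $\rho_0$-ball) be made precise; this uses the inner ball property of $\mathcal{S}$ essentially and is where the hypothesis cannot be dropped in Part I. The continuity (as opposed to mere local boundedness) of $\rho_T(\cdot)$ is a soft point once the explicit formula for $\sigma$ is in hand, but it does rely on the optimal trajectories and adjoints depending on $\bar x$ in a controlled way, for which semicontinuity of the optimal-trajectory set and continuity of $T$ on $\mathcal{O}$ suffice.
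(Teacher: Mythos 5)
Your proposed normal vector is wrong in a way that cannot be patched by constants. You take $(p(0),-1)$ as the candidate proximal normal to $\mathrm{hypo}(T_{|\mathcal{O}})$ at $(\bar x,T(\bar x))$. Any proximal normal $(v_x,v_\beta)$ to the hypograph at a graph point must have $v_\beta\ge 0$: testing \eqref{r-Q} with $y=(\bar x,\,T(\bar x)-\epsilon)$, which lies in the hypograph, gives $-v_\beta\,\epsilon\le\sigma\epsilon^2$, forcing $v_\beta\ge0$ as $\epsilon\to0^+$. With vertical component $-1$ the quantity $\langle p(0),x-\bar x\rangle-(\beta-T(\bar x))$ grows \emph{linearly} along that direction while the right-hand side is quadratic, so the inequality fails arbitrarily close to $(\bar x,T(\bar x))$. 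For the same reason your reduction ``since $\beta\le T(x)$ it suffices to control $\langle p(0),x-\bar x\rangle-(T(x)-T(\bar x))$'' goes the wrong way: for the hypograph, small $\beta$ makes the left side \emph{larger}, not smaller (that reduction is the epigraph/subgradient one). The paper's vector is $(-\bar p(r),\lambda)$ with $\lambda=H(\bar x,-\bar p(r))$, and a separate lemma (``Sign of Hamilton'') proves $\lambda\ge0$; crucially $\lambda$ may vanish, which is exactly the non-Lipschitz situation the theorem is designed to cover, so your suggestion to normalize $H(y(t),p(t))\equiv1$ is not available.

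The second genuine flaw is your ``target-side estimate'': bounding $T(x)$ from above by estimating ``the time needed to steer $y(T(\bar x),x)$ into $\mathcal S$'' in terms of its (signed) distance to the inner ball is precisely a Petrov-type controllability estimate, which is \emph{not} assumed here and is false in general (see the paper's Example 1, where $T$ is continuous but not Lipschitz near the target). The paper never steers perturbed points into $\mathcal S$. Instead, Lemma~\ref{Adjoint3} propagates the exterior-sphere inequality of $\mathcal S'$ at the endpoint to the superlevel sets $\mathcal S'(r-t)=\{T\ge r-t\}$: any $\bar y$ with $T(\bar y)\ge r-t$, flowed forward for time $r-t$ by the frozen field $F_{-\bar p(\cdot)}$, must land in $\mathcal S'$, and then the ball inequality at $\bar x_1$ plus the semiconvexity estimate of Proposition~\ref{PW} (your differentiation of $\langle p, y-\bar x\rangle$ is the right computation here) yields a quantified proximal normal $-\bar p(r-t)$ to $\mathcal S'(r-t)$. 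The hypograph inequality is then obtained in Lemma~\ref{Exterior} by a two-case argument ($T(\bar y)<T(\bar x)$ and $T(\bar y)\ge T(\bar x)$) using only these superlevel-set normals, $\lambda\ge0$, and Gronwall bounds --- no controllability is needed. Two further items you leave open are handled in the paper and matter: the ball-realized transversality is obtained by applying Clarke's maximum principle to the attainable set of the \emph{reversed} inclusion issuing from the inner ball $\overline B(\bar x_1+\rho_0 v/|v|,\rho_0)\subseteq\mathcal S$ (your ``approximate $\mathcal S$ by its inner balls'' idea, carried out); and this requires $\bar x\in\partial\mathcal A^-(r)$, which holds only when $\bar x$ is not a local maximum of $T$ --- the general case is recovered by applying the lemma at points $\bar x^+(t)$, $t>0$ small, and passing to the limit of the unit normals, a step absent from your outline.
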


\begin{Rem}\rm
The function $\rho_T(\cdot)$ can be explicitly computed and depends only on $x, T(x)$, and on  $c_0, K, K_1, K_2, \rho_0$.
\end{Rem}
Consequently, under the assumptions of Theorem \ref{Exterior-sphere-hypo}, $T_{|\mathcal{O}}(\cdot)$ enjoys the regularity properties described in Proposition \ref{hypoexternal}. Moreover, the following corollary follows from Theorem \ref{Exterior-sphere-hypo} and \cite[Theorem 21]{stern2}.
\begin{Corollary}\label{co:SCC}
Under the assumptions of Theorem \ref{Exterior-sphere-hypo}, if $T(\cdot)$ is locally Lipschitz in $\mathcal{O}$, then $T(\cdot)$ is locally semiconcave in $\mathcal{O}$.
\end{Corollary}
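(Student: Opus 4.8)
The plan is to deduce the corollary from \cite[Theorem 21]{stern2}, once the exterior sphere condition provided by Theorem~\ref{Exterior-sphere-hypo}---whose radius $\rho_T(\cdot)$ is only pointwise---has been turned into a \emph{locally uniform} one. Recall that \cite[Theorem 21]{stern2} asserts, for a locally Lipschitz function $f$ on an open set, that if $\mathrm{hypo}(f)$ satisfies a locally uniform exterior sphere condition then $f$ is locally semiconcave. So the whole task is to verify this hypothesis, which amounts to a localisation argument together with a compactness step.

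First I would fix $x_0\in\mathcal{O}$ and, using the local Lipschitz continuity of $T$ in $\mathcal{O}$, choose a bounded convex open set $V$ with $x_0\in V$, $\overline{V}\subset\mathcal{O}$, and a constant $L>0$ such that $T$ is $L$-Lipschitz on $V$. By Theorem~\ref{Exterior-sphere-hypo}, $\mathrm{hypo}(T_{|\mathcal{O}})$ satisfies the $\rho_T(\cdot)$-exterior sphere condition with $\rho_T:\mathcal{O}\to(0,\infty)$ continuous; since $\overline{V}$ is compact, $\rho:=\min_{\overline{V}}\rho_T>0$, and, as a proximal normal realised by a ball of radius $r$ is also realised by every smaller ball, at each $x\in V$ there is a nonzero $v_x\in N^P_{\mathrm{hypo}(T_{|\mathcal{O}})}(x,T(x))$ realised by a ball of radius $\rho$ in $\mathbb{R}^{n+1}$. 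Thus $\mathrm{hypo}(T)$ carries a uniform exterior sphere condition of radius $\rho$ over $V$, and \cite[Theorem 21]{stern2} yields the semiconcavity of $T$ on convex subsets of $V$. Since $x_0\in\mathcal{O}$ was arbitrary, $T$ is locally semiconcave in $\mathcal{O}$.

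For completeness I record the mechanism behind the cited theorem. Writing $v_x=(q_x,\lambda_x)\in\mathbb{R}^n\times\mathbb{R}$, the defining inequality
\[
\langle v_x,(y,\beta)-(x,T(x))\rangle\ \le\ \frac{|v_x|}{2\rho}\bigl(|y-x|^2+|\beta-T(x)|^2\bigr),\qquad (y,\beta)\in\mathrm{hypo}(T),
\]
tested along the vertical segment $(x,T(x)-s)$, $s\downarrow0$, gives $\lambda_x\ge0$; the essential point is that the Lipschitz bound upgrades this to $\lambda_x>0$, since if $\lambda_x=0$ then $q_x\neq0$ and testing at $\bigl(x+tq_x/|q_x|,\,T(x+tq_x/|q_x|)\bigr)$ with $t\downarrow0$ would force $|q_x|\le0$. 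Testing instead at $(x+th,T(x+th))$ with $|h|=1$ and using $T(x+th)-T(x)\ge -Lt$ yields $\langle q_x,h\rangle\le\lambda_xL$ for all unit $h$, hence $|q_x|\le L\lambda_x$. With $p_x:=-q_x/\lambda_x$, so that $|p_x|\le L$ and $|v_x|/\lambda_x=\sqrt{1+|p_x|^2}\le\sqrt{1+L^2}$, the defining inequality at $(y,T(y))$ together with $|T(y)-T(x)|\le L|y-x|$ gives
\[
T(y)\ \le\ T(x)+\langle p_x,y-x\rangle+\frac{(1+L^2)^{3/2}}{2\rho}\,|y-x|^2
\]
for all $y$ in a neighbourhood of $x$ inside $V$. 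As the coefficient on the right is independent of $x\in V$, this uniform quadratic upper support at every point of a convex set is one of the standard characterisations of semiconcavity there (equivalently, $x\mapsto T(x)-\frac{(1+L^2)^{3/2}}{2\rho}|x|^2$ is concave there).

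I expect the only genuinely delicate point to be the strict inequality $\lambda_x>0$ for the vertical component of the realising normal: this is precisely where local Lipschitz continuity enters and cannot be weakened---for example, the hypograph of $x\mapsto-\sqrt{|x|}$ satisfies an exterior sphere condition near the origin although that function is not semiconcave there. The remaining ingredients---the compactness argument making the radius locally uniform, and the passage from the proximal ball inequality to the quadratic estimate---are routine.
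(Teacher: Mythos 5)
Your proposal follows exactly the route the paper takes: the paper proves this corollary simply by invoking Theorem~\ref{Exterior-sphere-hypo} together with \cite[Theorem 21]{stern2}, which is precisely your plan. Your additional work---making the radius locally uniform via the continuity of $\rho_T(\cdot)$ and compactness, and unwinding the mechanism of the cited theorem (in particular the step $\lambda_x>0$ where the Lipschitz hypothesis is essential)---is correct and only makes explicit what the citation leaves implicit.
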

The main part of the proof of Theorem \ref{Exterior-sphere-hypo} is divided into three lemmas. 
\begin{Lemma}\label{Adjoint3}
Suppose  $\bar{x}\in\mathcal{O}$ is not a local maximum of $T(\cdot)$. 
 Let  $r=T(\bar{x})$ and let $\bar{x}^+(\cdot)$ be an optimal trajectory steering $\bar{x}$ to $\mathcal{S}$ in time $r$, and set $\bar{x}^-(s)=\bar{x}^+(r-s)$. Then, there exists an arc $\bar{p}(\cdot)$ defined on $[0,r]$, with $\bar{p}(s)\neq 0$ for all $s\in [0,r]$,  such that 
\begin{equation}\label{IVP4}
\left\{\begin{array}{ll}
-\dot{\bar{p}}(s)\: \in \: \partial_xH(\bar{x}^-(s),-\bar{p}(s))\quad a.e.\ s\in [0,r], \\
\dot{\bar{x}}^-(s)=-F_{-\bar{p}(s)}(\bar{x}^-(s))\quad a.e.\ s\in [0,r].
\end{array}\right.
\end{equation}
Moreover, $-\bar{p}(r-t)\in N^P_{\mathcal{S}'(r-t)}(\bar{x}^+(t))$ is realized by a ball of radius $\rho(r-t)$ for all $t\in [0,r]$, i.e., 
\begin{equation}\label{Internal-S(r-t)}
\Big\langle \frac{-\bar{p}(r-t)}{|\bar{p}(r-t)|}, \bar{y}-\bar{x}^+(t)\Big\rangle \leq\ \frac{1}{2\rho(r-t)}|\bar{y}-\bar{x}^+(t)|^2,\quad\forall\ \bar{y}\in\mathcal{S}'(r-t), 
\end{equation}  
where 
\begin{equation}\label{rho}
\rho(s)=\frac{\rho_0}{1+2c_0\rho_0 s}e^{-(K+2K_1)s}.
\end{equation}
\end{Lemma}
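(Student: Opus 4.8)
The plan is to derive the adjoint arc $\bar p(\cdot)$ from the Pontryagin maximum principle applied along the optimal trajectory, and then to propagate the exterior sphere property of the sub-level sets $\mathcal{S}'(r-t)$ backwards in time by a Gronwall-type estimate, starting from the inner ball property of $\mathcal{S}$ at the endpoint $\bar x^+(r)\in\partial\mathcal{S}$. First I would note that, since $\bar x$ is not a local maximum of $T(\cdot)$, the sub-level set $\mathcal{S}'(r)=\{T\ge r\}$ has $\bar x=\bar x^+(0)$ in its boundary and $\bar x$ is not interior to it, so $N^P_{\mathcal{S}'(r)}(\bar x)$ contains a nonzero vector; the dynamic programming principle gives $\bar x^+(t)\in\partial\mathcal{S}'(r-t)$ for every $t\in[0,r]$. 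Writing $\bar x^-(s)=\bar x^+(r-s)$, which solves $\dot{\bar x}^-\in -F(\bar x^-)$, one applies the nonsmooth maximum principle (in the form valid for differential inclusions with semiconvex-in-$x$ Hamiltonian, as in \cite{CW,CaMW}) to the reversed problem: there is an absolutely continuous arc $\bar p(\cdot)$ on $[0,r]$ with $-\dot{\bar p}(s)\in\partial_xH(\bar x^-(s),-\bar p(s))$ a.e., with the maximization condition $\langle\dot{\bar x}^-(s),-\bar p(s)\rangle=H(\bar x^-(s),-\bar p(s))$ a.e., and with a transversality condition at $s=0$ linking $-\bar p(0)$ to the proximal normal cone of $\mathcal{S}$ at $\bar x^+(r)$. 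By (H2) the maximization condition forces $\dot{\bar x}^-(s)=-F_{-\bar p(s)}(\bar x^-(s))$, i.e. the second line of \eqref{IVP4}; and since $\bar p(0)\neq0$, Lemma~\ref{adjoint vector} (applied to $G(s,p):=-\partial_xH(\bar x^-(s),-p)$, which satisfies the hypotheses there with $K_0=K$ by (F2)) guarantees $\bar p(s)\neq0$ on all of $[0,r]$, and also gives the exponential bounds on $|\bar p(\cdot)|$ that will be needed below.

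The core of the argument is the quantitative claim \eqref{Internal-S(r-t)}. I would set $q(t):=-\bar p(r-t)$ and prove, by a differential inequality in $t$, that the ``curvature constant'' $\sigma(t)$ for which $q(t)\in N^P_{\mathcal{S}'(r-t)}(\bar x^+(t))$ is realized (i.e. $\langle q(t), y-\bar x^+(t)\rangle\le\sigma(t)|y-\bar x^+(t)|^2$ for all $y\in\mathcal{S}'(r-t)$, after normalizing $|q(t)|=1$) stays controlled by $\tfrac{1}{2\rho(r-t)}$ with $\rho$ as in \eqref{rho}. At the endpoint $t=r$: $\bar x^+(r)\in\partial\mathcal{S}$, $\mathcal{S}'(r)\supseteq\mathcal S$... more precisely $\mathcal{S}=\{T\le 0\}$ and the inner ball property of $\mathcal{S}$ of radius $\rho_0$ means $\mathcal{S}'=\overline{\mathbb{R}^n\setminus\mathcal S}$ has the $\rho_0$-exterior sphere condition, which gives a proximal normal to $\mathcal{S}'(r)$ at $\bar x^+(r)$ realized by a ball of radius $\rho(0)=\rho_0$, matching the transversality condition. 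To propagate from $t+h$ to $t$, fix $\bar y\in\mathcal{S}'(r-t)$; by the dynamic programming principle there is a trajectory from $\bar y$ reaching $\mathcal{S}$ in time $\le r-t$, hence a point $\bar y_h$ on it with $\bar y_h\in\mathcal{S}'(r-t-h)$ and $|\bar y_h-\bar y|=O(h)$. One then writes
\[
\langle q(t),\bar y-\bar x^+(t)\rangle
= \langle q(t+h),\bar y_h-\bar x^+(t+h)\rangle
+\big[\langle q(t),\bar y-\bar x^+(t)\rangle-\langle q(t+h),\bar y_h-\bar x^+(t+h)\rangle\big],
\]
estimates the first term by the inductive bound $\tfrac{1}{2\rho(r-t-h)}|\bar y_h-\bar x^+(t+h)|^2$, and expands the bracket using $\dot{\bar x}^+(t)=F_{-\bar p(r-t)}(\bar x^+(t))$, the adjoint equation, the Lipschitz constant $K_1$ of $F_p(\cdot)$, the semiconvexity estimate from Proposition~\ref{PW}(2) applied to $H(\cdot,-\bar p)$ (this is where $c_0$ enters), and the growth/Lipschitz constant $K$ of $H(\cdot,p)$. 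Collecting terms, the curvature constant $\sigma(t):=\tfrac{1}{2\rho(r-t)}$ must satisfy a linear ODE of the form $\dot\sigma = (K+2K_1)\sigma + c_0 + (\text{terms}\cdot\sigma^2\cdot h)$ in the relevant direction; passing to the limit $h\to0^+$ and solving the resulting Riccati/linear comparison inequality with initial datum $\sigma(r)=\tfrac{1}{2\rho_0}$ yields exactly $\sigma(t)\le\tfrac{1}{2\rho(r-t)}$ with $\rho(s)=\frac{\rho_0}{1+2c_0\rho_0 s}e^{-(K+2K_1)s}$, which is \eqref{rho}.

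The main obstacle I anticipate is making the propagation step rigorous: the set $\mathcal{S}'(r-t)$ varies with $t$ in a merely continuous (not smooth) way, so one cannot differentiate the defining inequality directly, and must instead carefully choose the comparison point $\bar y_h$ in the slightly larger sub-level set, control the error $|\bar y_h-\bar y|$ in terms of $h$ uniformly in $\bar y$ (using the a priori bound on trajectory speeds from (F3) and Lemma~\ref{Pos}), and handle the fact that $\bar y$ ranges over an unbounded closed set. A secondary technical point is the form of the maximum principle and transversality condition for differential inclusions under only (F)+(H1)+(H2)—one must quote the appropriate version (e.g. from \cite{CW,CaMW}) and check that semiconvexity of $H(\cdot,p)$, via Proposition~\ref{PW}, is exactly what upgrades the Clarke-subdifferential inclusion $-\dot{\bar p}\in\partial_xH$ into the one-sided quadratic estimate that the Gronwall argument consumes. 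Once the differential inequality for $\sigma$ is established, solving it and identifying $\rho$ is a routine computation.
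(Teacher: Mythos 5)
The overall shape of your plan (adjoint arc plus backward propagation of the quadratic estimate via the semiconvexity of $H$) is the right one, but two steps, as written, do not work. First, the endpoint/transversality step: you invoke a generic maximum principle with transversality ``linking $-\bar p(0)$ to the proximal normal cone of $\mathcal S$ at $\bar x^+(r)$'' and assert that this matches the inner--ball normal. The inner ball property only guarantees that \emph{some} nonzero proximal normal $v$ at $\bar x_1=\bar x^+(r)$ is realized by a ball of radius $\rho_0$; a multiplier coming from a maximum principle with target $\mathcal S$ lies in a possibly much larger normal cone and need not be that $v$, so the initial inequality $\langle -\bar p(0)/|\bar p(0)|,z-\bar x_1\rangle\le \frac{1}{2\rho_0}|z-\bar x_1|^2$ for $z\in\mathcal S'$ does not follow. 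The paper avoids this by applying Clarke's maximum principle for boundary trajectories of attainable sets (\cite[Theorem 3.5.4]{C}) to the attainable set $\mathcal A^-(r)$ of the reversed inclusion issued from the ball $\overline{B}\big(\bar x_1+\rho_0\frac{v}{|v|},\rho_0\big)\subseteq\mathcal S$: the hypothesis that $\bar x$ is not a local maximum of $T$ is used precisely to show $\bar x\in\partial\mathcal A^-(r)$ (otherwise a whole neighborhood would satisfy $T\le r$), which yields a nonvanishing adjoint whose transversality is with respect to the \emph{ball}; since the normal cone to the ball at $\bar x_1$ is the single ray along $-v$, one gets $-\bar p(0)/|\bar p(0)|=v/|v|$ and hence \eqref{est2}. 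Your alternative use of the non--local--maximum hypothesis, namely that $N^P_{\mathcal S'(r)}(\bar x)$ contains a nonzero vector, is both unjustified (boundary points need not carry nonzero proximal normals) and not the role this hypothesis plays.

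Second, the propagation step fails with your choice of comparison points. If $\bar y_h$ lies on a (near--)optimal trajectory from $\bar y$ toward $\mathcal S$ with velocity $f\in F(\bar y)$, the increment contributes $\langle q(t+h),\bar y-\bar y_h\rangle\approx -h\langle q(t),f\rangle$, and there is no useful lower bound on $\langle q(t),f\rangle$, since $q$ is the adjoint along $\bar x^+$, not along an optimal trajectory from $\bar y$; at best you are left with an error of order $h\,|\bar y-\bar x^+(t)|$, which is linear, not quadratic, and after summation it cannot be absorbed into $\sigma(t)|\bar y-\bar x^+(t)|^2$ when $\bar y$ is close to $\bar x^+(t)$. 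The cancellation that produces \eqref{rho} requires comparing $\bar x^-$ with the trajectory of $\bar y$ driven by the \emph{same} feedback: the paper solves $\dot{\bar y}^+(s)=F_{-\bar p(r-t-s)}(\bar y^+(s))$, $\bar y^+(0)=\bar y$, observes that $T(\bar y)\ge r-t$ forces the endpoint $\bar y_1=\bar y^+(r-t)$ to lie in $\mathcal S'$ (so the $\rho_0$--inequality applies there), and then differentiates $\langle-\bar p(s),\bar y^-(s)-\bar x^-(s)\rangle$: the identity $\langle -\bar p,-F_{-\bar p}(\bar y^-)+F_{-\bar p}(\bar x^-)\rangle=-H(\bar y^-,-\bar p)+H(\bar x^-,-\bar p)$, together with \eqref{IVP4} and Proposition \ref{PW}(2), cancels all linear terms and leaves only $c_0|\bar p|\,|\bar y^--\bar x^-|^2$; integrating over $[0,r-t]$ and using Lemma \ref{adjoint vector} and \eqref{Lip-y} gives \eqref{Internal-S(r-t)} directly, with no induction in $t$, no Riccati comparison, and no issue with the nonsmooth dependence of $\mathcal S'(r-t)$ on $t$. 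So the ``main obstacle'' you flag is not merely technical: with your comparison trajectory the differential inequality for $\sigma$ cannot be established, while replacing it by the feedback flow $F_{-\bar p}$ turns your scheme into a discretized version of the paper's one--shot integral argument.
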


\begin{proof}
Set $\bar{x}_1=\bar{x}^+(r)$. Of course, $\bar{x}_1\in\partial\mathcal{S}$. Since $\mathcal{S}$ satisfies the $\rho_0$-internal sphere condition, there exists a proximal normal vector $v\neq 0$ to $\mathcal{S}'$ at $\bar{x}_1$ such that $\overline{B}\big(\bar{x}_1+\rho_0 \frac{v}{|v|},\rho_0\big)\subseteq\mathcal{S}$, i.e.,
\begin{equation}\label{est1}
\Big\langle\frac{v}{|v|},z-\bar{x}_1\Big\rangle\leq\frac{1}{2\rho_0}\ |z-\bar{x}_1|^2\quad\forall z\in\mathcal{S}'.
\end{equation}
\indent Now, consider the reversed differential inclusion with initial data 
\begin{equation}\label{Reversed}
 \left\{\begin{array}{ll}
\dot{y}(s)\: \in \: -F(y(s)) & a.e.\ s\in [0,r], \\
y(0)  \: \in \:  \overline{B}\big(\bar{x}_1+\rho_0 \frac{v}{|v|},\rho_0\big)\subseteq\mathcal{S}.
\end{array}\right.
\end{equation}
The Hamiltonian associated with $-F$ is defined by
\begin{equation}\label{H-reversed}
H^-(x,p):=\sup_{v\in -F(x)}\langle v,p\rangle=\sup_{w\in F(x)}\langle w,-p\rangle=H(x,-p).
\end{equation}
Let us recall that the attainable set from $\overline{B}(\bar{x}_1+\rho {v\over |v|},\rho)$, denoted $\mathcal{A}^{-}(r)$, is defined to be the set of all points $y(r)$ where $y(\cdot)$  is a trajectory satisfying (\ref{Reversed}). Since  $\bar{x}^-(\cdot)$ is a solution of (\ref{Reversed}) with initial point $y(0)=\bar{x}_1$, and $T(\cdot)$ has not a local maximum at the point $\bar{x}$, one has that $\bar{x}^-(r)=\bar{x}$ is on the boundary of $\mathcal{A}^{-}(r)$. Indeed, suppose $\bar{x}$ is not on the boundary of $\mathcal{A}^{-}(r)$, then there exists $\epsilon>0$ such that $B(\bar{x},\epsilon)\subset \mathcal{A}^{-}(r)$. Thus, $T(y)\leq r=T(\bar{x})$ for all $y\in B(\bar{x},\epsilon)$, and we get a contradiction since $T(\cdot)$ has not a local maximum at $\bar{x}$.  Now, since $\bar{x}$ is on the boundary of  $\mathcal{A}^{-}(r)$, by \cite[Theorem 3.5.4]{C}, there is an arc $\bar{p}(\cdot)$, such that $\|\bar{p}(\cdot)\|_{\infty}>0$, satisfying 
\begin{equation}\label{Transa1}
(-\dot{\bar{p}}(s),\dot{\bar{x}}^-(s))\in\partial H^-(\bar{x}^-(s),\bar{p}(s))\quad a.e.\ s\in[0,r],
\end{equation}
and
\begin{equation}\label{initial-c1}
\bar{p}(0)\in N_{\overline{B}\big(\bar{x}_1+\rho_0 \frac{v}{|v|},\rho_0\big)}(\bar{x}_1).
\end{equation}
From (\ref{Transa1}), (\ref{H-reversed}) and Corollary \ref{Div}, we have 
\begin{equation}\label{Tran-p}
-\dot{\bar{p}}(s)\in\partial_xH(\bar{x}^-(s),-\bar{p}(s))\quad a.e.\ s\in [0,r].
\end{equation}
Moreover, owing to \eqref{H-Lipschitz}, for all $v\in\partial_xH(x,p)$ we have $|v|\leq K|p|$. Therefore, applying Lemma \ref{adjoint vector} to $G(s,-\bar{p}(s)) = \partial_xH(\bar{x}^-(s),-\bar{p}(s))$, we get
\begin{equation}\label{Norm-p}
e^{-Ks}|\bar{p}(0)|\leq|\bar{p}(s)|\leq e^{Ks}|\bar{p}(0)|\quad\forall s\in [0,r].
\end{equation}
Since $\|\bar{p}(\cdot)\|_{\infty}>0$, we  have $\bar{p}(s)\neq 0$ for all $s\in [0,r]$. Therefore, from (\ref{Transa1}) and Corollary~\ref{Div} we get
\begin{equation}\label{Tran-x}
\dot{\bar{x}}^-(s)=-\partial_p H(\bar{x}^-(s),-\bar{p}(s))=-F_{-\bar{p}(s)}(\bar{x}^-(s))\quad a.e.\ s\in [0,r].
\end{equation}
\indent We are now going to prove (\ref{Internal-S(r-t)}).
Fix $t\in [0,r]$ and let $\bar{y}\in\mathcal{S}'(r-t)$, i.e., $T(\bar{y})\geq r-t$. Let $\bar{y}^+(\cdot)$ be the solution of the Cauchy problem  
\begin{equation}\label{IVP2}
 \left\{\begin{array}{ll}
\dot{\bar{y}}^+(s)\: = \: F_{-\bar{p}(r-t-s)}(\bar{y}^+(s)) & a.e.\ s\in [0,r-t], \\
\bar{y}^+(0)  \: = \:  \bar{y}.
\end{array}\right.
\end{equation}
Note that $\bar{y}_1:=\bar{y}^+(r-t)\in\mathcal{S}' $. Then, $\bar{y}^-(s):=\bar{y}^+(r-t-s)$ satisfies $\bar{y}^-(r-t)=\bar{y}$ and  
\begin{equation}\label{IVP3}
 \left\{\begin{array}{ll}
\dot{\bar{y}}^-(s)\: = \: -F_{-\bar{p}(s)}(\bar{y}^-(s)) & a.e.\ s\in [0,r-t], \\
\bar{y}^-(0)  \: = \:  \bar{y}_1.
\end{array}\right.
\end{equation}
From (\ref{Tran-x}), (\ref{IVP3}) and (\ref{Lip-y}), we have
\begin{equation}\label{Dist1}
|\bar{y}^-(s)-\bar{x}^-(s)|\leq e^{K_1(r-t)}|\bar{y}_1-\bar{x}_1|\quad\forall s\in [0,r-t].
\end{equation} 
In order to prove (\ref{Internal-S(r-t)}), observe that
\begin{multline}\label{K2}
\langle -\bar{p}(r-t),\bar{y}^-(r-t)-\bar{x}^-(r-t)\rangle \\= \langle -\bar{p}(0),\bar{y}^-(0)-\bar{x}^-(0)\rangle+\int_{0}^{r-t}\frac{d}{ds}\langle -\bar{p}(s),\bar{y}^-(s)-\bar{x}^-(s)\rangle ds.
\end{multline}
Moreover,
\begin{multline*}
\frac{d}{ds}\langle -\bar{p}(s),\bar{y}^-(s)-\bar{x}^-(s)\rangle\\ = \langle -\dot{\bar{p}}(s),\bar{y}^-(s)-\bar{x}^-(s)\rangle+\langle -\bar{p}(s),\dot{\bar{y}}^-(s)-\dot{\bar{x}}^-(s)\rangle\\
= \langle -\dot{\bar{p}}(s),\bar{y}^-(s)-\bar{x}^-(s)\rangle +\langle -\bar{p}(s),-F_{-\bar{p}(s)}(\bar{y}^-(s))+F_{-\bar{p}(s)}(\bar{x}^-(s))\rangle\\
= \langle -\dot{\bar{p}}(s),\bar{y}^-(s)-\bar{x}^-(s)\rangle - H(\bar{y}^-(s),-\bar{p}(s))+H(\bar{x}^-(s),-\bar{p}(s)).
\end{multline*}
Recalling (\ref{Tran-p}) and Proposition \ref{PW} it follows that
\begin{multline*}
\langle -\dot{\bar{p}}(s),\bar{y}^-(s)-\bar{x}^-(s)\rangle - H(\bar{y}^-(s),-\bar{p}(s))+H(\bar{x}^-(s),-\bar{p}(s))\leq c_0\ |\bar{p}(s)|\ |\bar{y}^-(s)-\bar{x}^-(s)|^2.
\end{multline*}
Therefore,
\begin{equation}\label{K3}
\frac{d}{ds}\langle -\bar{p}(s),\bar{y}^-(s)-\bar{x}^-(s)\rangle\leq c_0\ |\bar{p}(s)|\ |\bar{y}^-(s)-\bar{x}^-(s)|^2.
\end{equation}
Owing to (\ref{initial-c1}) and the fact that $\bar{p}(0)\neq 0$, we have $-\frac{\bar{p}(0)}{|\bar{p}(0)|}=\frac{v}{|v|}$. Thus, by (\ref{est1}) and the fact that $\bar{y}_1\in\mathcal{S}'$ we obtain
\begin{equation}\label{est2}
\Big\langle-\frac{\bar{p}(0)}{|\bar{p}(0)|},\bar{y}_1-\bar{x}_1\Big\rangle\leq\frac{1}{2\rho_0}\ |\bar{y}_1-\bar{x}_1|^2.
\end{equation}
Combining (\ref{K2}), (\ref{K3}), (\ref{est2}) and noting that $\bar{x}^-(0)=\bar{x}_1$, $\bar{y}^-(0)=\bar{y}_1$, $x^-(r-t)=x^+(t)$, $\bar{y}^-(r-t)=\bar{y}$, we conclude that
\[
\langle-\bar{p}(r-t),\bar{y}-\bar{x}^+(t)\rangle\leq\ \frac{|\bar{p}(0)|}{2\rho_0}\ |\bar{y}_1-\bar{x}_1|^2+c_0\int_0^{r-t}|\bar{p}(s)||\bar{y}^-(s)-\bar{x}^-(s)|^2ds.
\]
Thus, by (\ref{Norm-p}) and (\ref{Dist1}), 
\[
\Big\langle\frac{-\bar{p}(r-t)}{|-\bar{p}(r-t)|},\bar{y}-\bar{x}^+(t)\Big\rangle\leq \Big(\frac{1}{2\rho_0}+c_0(r-t)\Big)e^{(K+2K_1)(r-t)} |\bar{y}-{x}^+(t)|^2.
\]
So, (\ref{Internal-S(r-t)}) follows (\ref{rho}), and the proof is complete.
\end{proof}


\begin{Lemma}\label{Sign of Hamilton}
Suppose  $\bar{x}\in\mathcal{O}$ is not a local maximum of $T(\cdot)$.
Let $r=T(\bar{x})$ and let $\bar{x}^+(\cdot)$ be an optimal trajectory steering $\bar{x}$ to $\mathcal{S}$ in time $r$. If $\bar{p}(\cdot)$ is the arc in Lemma \ref{Adjoint3}, then $H(\bar{x},-\bar{p}(r))\geq 0$.
\end{Lemma}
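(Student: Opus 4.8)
The plan is to feed the point $\bar y=\bar x$ itself back into the ball inequality (\ref{Internal-S(r-t)}) of Lemma~\ref{Adjoint3}, read along the optimal trajectory for small times $t>0$, and then let $t\to0^+$. Geometrically this just says that the proximal normal $-\bar p(r)$ to $\mathcal S'(r)=\{T\ge r\}$ at $\bar x$ points into $\{T<r\}$, and so does the optimal velocity at $\bar x$, so their inner product is nonnegative; the work is in making this rigorous.

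First I would collect what is available. The arc $\bar p(\cdot)$ from Lemma~\ref{Adjoint3} is absolutely continuous, never vanishes on $[0,r]$, and by (\ref{Norm-p}) satisfies $|\bar p(s)|\ge e^{-Kr}|\bar p(0)|>0$; in particular $\bar p$ is continuous and bounded away from $0$ on $[0,r]$, so $|\bar p(r)|>0$. Since $\bar x^+(0)=\bar x$ and $T(\bar x)=r\ge r-t$, we have $\bar x\in\mathcal S'(r-t)$ for every $t\in[0,r]$, so $\bar y=\bar x$ is an admissible test point in (\ref{Internal-S(r-t)}). From (\ref{Tran-x}) and $\bar x^-(s)=\bar x^+(r-s)$ one gets $\dot{\bar x}^+(\sigma)=F_{-\bar p(r-\sigma)}(\bar x^+(\sigma))$ for a.e.\ $\sigma\in[0,r]$; writing $w_\sigma:=F_{-\bar p(r-\sigma)}(\bar x^+(\sigma))\in F(\bar x^+(\sigma))$, assumption (F3) and the boundedness of the continuous arc $\bar x^+$ on $[0,r]$ give a constant $M>0$ with $|w_\sigma|\le M$, hence $|\bar x-\bar x^+(t)|=\big|\int_0^t w_\sigma\,d\sigma\big|\le Mt$.

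Next I would apply (\ref{Internal-S(r-t)}) with $\bar y=\bar x$: for $t\in(0,r)$,
\[
\Big\langle\frac{-\bar p(r-t)}{|\bar p(r-t)|},\,\bar x-\bar x^+(t)\Big\rangle\ \le\ \frac{1}{2\rho(r-t)}\,|\bar x-\bar x^+(t)|^2 .
\]
Substituting $\bar x-\bar x^+(t)=-\int_0^t w_\sigma\,d\sigma$ and multiplying through by $|\bar p(r-t)|/t>0$ gives
\[
-\Big\langle -\bar p(r-t),\ \frac{1}{t}\int_0^t w_\sigma\,d\sigma\Big\rangle\ \le\ \frac{|\bar p(r-t)|}{2\rho(r-t)}\cdot\frac{|\bar x-\bar x^+(t)|^2}{t}\ \le\ \frac{|\bar p(r-t)|\,M^2\,t}{2\rho(r-t)} .
\]
Now let $t\to0^+$. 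By (\ref{rho}), $\rho(\cdot)$ is continuous and strictly positive, so the right-hand side tends to $0$. For the left-hand side, $\sigma\mapsto w_\sigma$ is continuous at $\sigma=0$, since $\bar x^+$ and $\bar p$ are continuous and $(x,p)\mapsto F_p(x)=\nabla_pH(x,p)$ is jointly continuous on $\R^n\times(\R^n\setminus\{0\})$ (Lipschitz in $x$ uniformly in $p$ by (H2), and $C^1$ in $p$ because $H(x,\cdot)$ is finite, convex and differentiable off the origin); hence $\frac{1}{t}\int_0^t w_\sigma\,d\sigma\to w_0=F_{-\bar p(r)}(\bar x)$, and together with $-\bar p(r-t)\to-\bar p(r)$ the left-hand side converges to $-\langle -\bar p(r),\,F_{-\bar p(r)}(\bar x)\rangle$. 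Since $F_{-\bar p(r)}(\bar x)$ is by definition the maximizer of $v\mapsto\langle v,-\bar p(r)\rangle$ over $F(\bar x)$, this equals $-H(\bar x,-\bar p(r))$. Passing to the limit thus yields $-H(\bar x,-\bar p(r))\le0$, i.e.\ $H(\bar x,-\bar p(r))\ge0$.

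I expect the main obstacle to be the limit argument rather than the geometric idea. The delicate point is that $\bar x^+$ need not be differentiable at $\sigma=0$, which forces one to average $w_\sigma$ over $[0,t]$ instead of evaluating $\dot{\bar x}^+(0)$; and the continuity of $\sigma\mapsto w_\sigma$ at $0$, which makes the average converge, rests on the joint continuity of $F_p(x)$, hence ultimately on the convexity of $H$ in $p$ together with the uniform smoothness hypothesis (H2).
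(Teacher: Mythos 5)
Your proposal is correct and follows essentially the same route as the paper's proof: take $\bar y=\bar x$ in (\ref{Internal-S(r-t)}), rewrite $\bar x-\bar x^+(t)$ as the integral of $-F_{-\bar p(r-s)}(\bar x^+(s))$, divide by $t$, and let $t\to0^+$ to get $\langle-\bar p(r),-F_{-\bar p(r)}(\bar x)\rangle\le0$, i.e.\ $H(\bar x,-\bar p(r))\ge0$. The only difference is that you spell out the limit passage (boundedness of the velocities and continuity of $s\mapsto F_{-\bar p(r-s)}(\bar x^+(s))$ at $s=0$), which the paper leaves implicit.
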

\begin{proof}
Fixing $t\in (0,r]$, by (\ref{Internal-S(r-t)}) and the fact that $\bar{x}\in\mathcal{S}'(r-t)$, we have 
\[
\langle -\bar{p}(r-t),\bar{x}-\bar{x}^+(t)\rangle\leq\frac{1}{2\rho(r-t)}\ |\bar{p}(r-t)|\ |\bar{x}-\bar{x}^+(t)|^2.
\]
Equivalently,
\[
\Big\langle -\bar{p}(r-t),\int_0^t-F_{-\bar{p}(r-s)}(\bar{x}^+(s))ds\Big\rangle\leq\frac{1}{2\rho(r-t)}\ |\bar{p}(r-t)|\ \Big|\int_0^t-F_{-\bar{p}(r-s)}(\bar{x}^+(s))ds\Big|^2. 
\]
Dividing by $t$ both sides of the above inequality, we get
\[
\Big\langle -\bar{p}(r-t),\frac{\int_0^t-F_{-\bar{p}(r-s)}(\bar{x}^+(s))ds}{t}\Big\rangle\leq\frac{1}{2\rho(r-t)}\ |\bar{p}(r-t)|\ \frac{|\int_0^t-F_{-\bar{p}(r-s)}(\bar{x}^+(s))ds|^2}{t}.
\]
As $t\rightarrow 0$, we obtain
\[
\langle-\bar{p}(r),-F_{-\bar{p}(r)}(\bar{x})\rangle\ \leq 0.
\]
This implies that $H(\bar{x},-\bar{p}(r))\geq 0$.
\end{proof}

\begin{Lemma}\label{Exterior}
Suppose  $\bar{x}\in\mathcal{O}$ is not a local maximum of $T(\cdot)$. 
Let $r=T(\bar{x})$ and let  $\bar{x}^+(\cdot)$ be an optimal trajectory steering $\bar{x}$ to $\mathcal{S}$ in time $r$. Let $\bar{p}(\cdot)$ be the arc given by Lemma \ref{Adjoint3}, and set $\lambda=H(\bar x,-\bar p(r))$. Then there exists a positive constant $\rho_T$ such that $(-\bar{p}(r),\lambda)\in N^P_{hypo(T_{|\mathcal{O}})}(\bar{x},T_{|\mathcal{O}}(\bar{x}))$ is realized by a ball of radius $\rho_T$, i.e., for all $\bar{y}\in\mathcal{O}$ and $\beta\leq T(\bar{y})$ 
\begin{equation}\label{Exterior-T}
\Big\langle \frac{(-\bar{p}(r),\lambda)}{|(-\bar{p}(r),\lambda)|}\ ,\ (\bar{y}-\bar{x},\beta-r)\Big\rangle\leq\frac{1}{2\rho_T}(|\bar{y}-\bar{x}|^2+|\beta -r|^2)\,.
\end{equation}
 Moreover, $\rho_T=\rho_T(\bar{x})$ where $\rho_T(\cdot):\mathcal{O}\rightarrow (0,\infty)$ is a continuous function that can be computed explicitly.
\end{Lemma}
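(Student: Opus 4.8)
The strategy is to transfer the interior-sphere estimate for the sub-level sets, \eqref{Internal-S(r-t)}, into a proximal-normal estimate for the hypograph of $T_{|\mathcal O}$ at the base point $(\bar x, r)$ with $r=T(\bar x)$. Given a test point $(\bar y,\beta)$ with $\bar y\in\mathcal O$ and $\beta\le T(\bar y)$, I first reduce to the case $\beta\le r$: indeed, the left-hand side of \eqref{Exterior-T} contains the term $\lambda(\beta-r)$ with $\lambda=H(\bar x,-\bar p(r))\ge 0$ by Lemma~\ref{Sign of Hamilton}, so if $\beta>r$ one must still control this term, and I expect to handle it by noting that $T(\bar y)\ge\beta$ forces $\bar y\in\mathcal S'(\beta)\subseteq\mathcal S'(t)$ for every $t\le\beta$ and then running the same comparison argument with a later value of the "time level" — alternatively, the semiconcavity-type quadratic remainder absorbs it. The core case is $\beta\le r$, equivalently $t:=r-\beta\ge 0$, and then $T(\bar y)\ge\beta = r-t$ means exactly $\bar y\in\mathcal S'(r-t)$, which is the hypothesis under which \eqref{Internal-S(r-t)} applies.

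The key computation is to follow $\bar y$ backwards along the flow $F_{-\bar p(\cdot)}$ exactly as in the proof of Lemma~\ref{Adjoint3}: one forms the trajectory $\bar y^{+}(\cdot)$ solving the analogue of \eqref{IVP2} starting at $\bar y$, reaching a point $\bar y_1\in\mathcal S'$ at time $r-t$, and compares it with $\bar x^{-}(\cdot)$. Writing $\bar y-\bar x^{+}(t) = (\bar y - \bar y^{+}(r-t)) \;+\; (\bar y^{+}(r-t)-\bar x^{+}(t))$ — no, rather one keeps track, through the identity $\langle -\bar p(r-t),\bar y^{-}(r-t)-\bar x^{-}(r-t)\rangle$ evolving in $s$, of the full quantity $\langle -\bar p(r),\bar y-\bar x\rangle$ when $t=0$; but here $t>0$ in general, so one uses $\bar x^{+}(t)$ in place of $\bar x$. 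The new ingredient compared with Lemma~\ref{Adjoint3} is that one does not land on $\bar x$ but on an intermediate point of the optimal arc, and one must re-express $\langle-\bar p(r),\bar y-\bar x\rangle$ by splitting off $\langle-\bar p(r),\bar x^{+}(t)-\bar x\rangle = \langle-\bar p(r),\int_0^t \dot{\bar x}^{+}(s)\,ds\rangle$ and comparing $-\bar p(r)$ with $-\bar p(r-s)$ along the way, using the Lipschitz/exponential bounds on $\bar p(\cdot)$ from Lemma~\ref{adjoint vector} and \eqref{Norm-p}. Collecting terms, the linear part of $\langle(-\bar p(r),\lambda),(\bar y-\bar x,\beta-r)\rangle$ — note $\lambda(\beta-r)=-\lambda t$, which is $\le 0$ — is dominated by: (i) the quadratic remainder $\frac{1}{2\rho(r-t)}|\bar p(r-t)|\,|\bar y-\bar x^{+}(t)|^2$ from \eqref{Internal-S(r-t)}; (ii) the Gronwall defect $c_0\int_0^{r-t}|\bar p(s)|\,|\bar y^{-}(s)-\bar x^{-}(s)|^2\,ds$ as in \eqref{K3}; (iii) cross terms from the $\bar p(r)$-vs-$\bar p(r-s)$ replacement, which are $O(t)\,|\bar y-\bar x^{+}(t)|$ and must be Cauchy–Schwarz'ed into $O(1)(|\bar y-\bar x|^2 + t^2)$. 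The estimate $|\bar y-\bar x^{+}(t)|\le |\bar y-\bar x| + |\bar x-\bar x^{+}(t)|\le |\bar y-\bar x| + C t$ (from Lemma~\ref{Pos}-type bounds on the optimal arc) together with $|\bar y^{-}(s)-\bar x^{-}(s)|\le e^{K_1(r-t)}|\bar y_1-\bar x_1|$ and a bound $|\bar y_1-\bar x_1|\le e^{K_1(r-t)}|\bar y - \bar x^{+}(t)|$ running the Lipschitz estimate \eqref{Lip-y} in the reverse time direction lets one replace every occurrence of $|\bar y^{-}(s)-\bar x^{-}(s)|$ and $|\bar y_1-\bar x_1|$ by a constant multiple of $|\bar y-\bar x^{+}(t)|$, hence by a constant multiple of $|\bar y-\bar x|+Ct$, hence (using $(a+b)^2\le 2a^2+2b^2$ and $t=|\beta-r|$) by $C(|\bar y-\bar x|^2+|\beta-r|^2)$. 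Dividing by $|(-\bar p(r),\lambda)|$, which by \eqref{Norm-p} is bounded below by a positive quantity depending only on $|\bar p(r)|$, yields \eqref{Exterior-T} with an explicit $\rho_T$.

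The main obstacle, and the place where care is genuinely needed, is the bookkeeping of the $\beta>r$ / $\beta<r$ dichotomy and the sign of the "vertical" component: because $\lambda=H(\bar x,-\bar p(r))$ may be zero, the vector $(-\bar p(r),\lambda)$ is automatically nonzero (its first component is $\neq0$), but the realized-by-a-ball radius must be uniform across both half-cases, so I would treat $\beta\le r$ via the flow argument above and, for $\beta>r$, observe that $\langle(-\bar p(r),\lambda),(\bar y-\bar x,\beta-r)\rangle = \langle -\bar p(r),\bar y-\bar x\rangle + \lambda(\beta-r)$ and that since $\bar y\in\mathcal S'(\beta)\subseteq\mathcal S'(r)$ the first term is already $\le \frac{1}{2\rho(r)}|\bar p(r)|\,|\bar y-\bar x|^2$ by \eqref{Internal-S(r-t)} with $t=0$, while the second term $\lambda(\beta-r)$ is the only "bad" (positive) term and is bounded by $\lambda|\beta-r|$; to absorb it into $\frac{1}{2\rho_T}|\beta-r|^2$ one needs $|\beta - r|$ bounded, which it is not a priori — so in fact the honest fix is to note that the hypograph is unbounded upward and the exterior-sphere property only concerns the proximal normal at $(\bar x,r)$, so one may and must restrict to $\beta$ with $|\beta - r|$ small (this is what "realized by a ball of radius $\rho_T$" permits: the inequality \eqref{r-Q} for the proximal normal is a local condition, automatically extended to all of $Q$ once $\sigma = |v|/2\rho_T$ is fixed and $Q$ lies in one half-space near the point). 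Verifying the continuity and explicit form of $\rho_T(\cdot)$ as a function of $\bar x$ is then a matter of tracking the dependence of $\rho(r-t)$, $|\bar p(r)|$, $e^{K_1 r}$, and the optimal-arc bound $Ct\le C\,T(\bar x)$ through the final inequality, all of which depend continuously on $(\bar x, T(\bar x))$ and on the fixed constants $c_0,K,K_1,K_2,\rho_0$ — and, crucially, $T(\cdot)$ is continuous on $\mathcal O$ by hypothesis, so $\rho_T(\cdot)$ inherits continuity.
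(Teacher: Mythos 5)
Your treatment of the ``core'' case $0\le\beta\le r$ is essentially the paper's first case: decompose $\bar y-\bar x$ through an intermediate point $\bar x^+(t)$ of the optimal arc, apply the sub-level estimate \eqref{Internal-S(r-t)} there, swap $\bar p(r)$ for $\bar p(r-t)$ via Lemma~\ref{adjoint vector}, and let the line integral $\langle-\bar p(r),\bar x^+(t)-\bar x\rangle=\lambda t+O(t^2)$ cancel the vertical term $\lambda(\beta-r)=-\lambda t$ (the paper uses the level $r_1=T(\bar y)$ and then $\lambda\ge0$, you use the level $\beta$ directly; both work, and $\beta<0$ is recovered trivially by monotonicity in $\beta$). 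The genuine gap is the case $\beta>r$, i.e.\ points $\bar y$ near $\bar x$ with $T(\bar y)>r$ and $\beta\in(r,T(\bar y)]$. Your plan there --- estimate $\langle-\bar p(r),\bar y-\bar x\rangle$ by \eqref{Internal-S(r-t)} at $t=0$ and ``absorb'' $\lambda(\beta-r)$ into $\frac{1}{2\rho_T}|\beta-r|^2$, restricting to $|\beta-r|$ small --- fails exactly where it matters. A positive \emph{linear} term cannot be dominated by a quadratic one for \emph{small} increments: since $T$ is only continuous (not Lipschitz), there may be hypograph points with $|\bar y-\bar x|^2\ll\beta-r\to0$, and then $\lambda(\beta-r)$ (with $\lambda=H(\bar x,-\bar p(r))$ possibly strictly positive) is of first order while both available bounds are of second order, so no constant $\rho_T$ works. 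Localization is also not the way out: far points of the hypograph satisfy \eqref{Exterior-T} automatically by Cauchy--Schwarz once $|(\bar y-\bar x,\beta-r)|\ge2\rho_T$, so the definition of ``realized by a ball'' is effectively a near-point condition anyway --- and the obstruction above lives precisely in the near regime. (Your stated justification, that the proximal inequality ``extends automatically once $Q$ lies in one half-space near the point,'' is not correct as written.)

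What is missing is the paper's device for $r<\beta\le r_1$: flow $\bar y$ \emph{forward} along the fixed selection, i.e.\ solve $\dot{\bar y}^+(s)=F_{-\bar p(r)}(\bar y^+(s))$, $\bar y^+(0)=\bar y$, up to time $\beta-r$, and set $\bar y_1=\bar y^+(\beta-r)$. Since this is an admissible trajectory of \eqref{System}, dynamic programming gives $T(\bar y_1)\ge T(\bar y)-(\beta-r)\ge r$, so $\bar y_1\in\mathcal S'(r)$ and Lemma~\ref{Adjoint3} (at $t=0$) controls $\langle-\bar p(r),\bar y_1-\bar x\rangle$ quadratically; meanwhile the displacement term
\begin{equation*}
\langle-\bar p(r),\bar y-\bar y_1\rangle=\int_0^{\beta-r}\langle-\bar p(r),-F_{-\bar p(r)}(\bar y^+(s))\rangle\,ds
=\lambda(r-\beta)+O\bigl(|\bar y-\bar x|^2+|\beta-r|^2\bigr),
\end{equation*}
using \eqref{global} and Lemma~\ref{Pos}, produces exactly the compensating linear term that cancels $\lambda(\beta-r)$, leaving only quadratic remainders. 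Without this construction (or an equivalent cancellation mechanism) the inequality \eqref{Exterior-T} is not established for $\beta>r$, so the proposal as it stands does not prove the lemma; the remaining bookkeeping (eliminating the dependence on $|\bar y|$, taking $\rho_T$ as in \eqref{rhoT}, and continuity of $\rho_T(\cdot)$ via continuity of $T$) is as you describe.
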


\begin{proof}
Let $\bar{y}\in\mathcal{O}$. Two cases may occur:\\
(i) \quad $T(\bar{y})<T(\bar{x})$,\\
(ii)\quad $T(\bar{y})\geq T(\bar{x})$.\\
\indent \textit{First case}: $T(\bar{y})=:r_1<r=T(\bar{x})$. Let $\bar{x}_1=\bar{x}^+(r-r_1)$ and write
\begin{equation}\label{C1}
\langle -\bar{p}(r),\bar{y}-\bar{x}\rangle =\langle -\bar{p}(r),\bar{y}-\bar{x}_1\rangle +\langle -\bar{p}(r),\bar{x}_1-\bar{x}\rangle.
\end{equation}
Recalling Lemma \ref{Adjoint3} and noting that $\bar{y}\in\mathcal{S}'(r_1)$, we can estimate the first term in the right-hand side of the above identity as follows
\begin{equation*}
\begin{split}
\langle -\bar{p}(r),\bar{y}-\bar{x}_1\rangle
=&\langle-\bar{p}(r_1),\bar{y}-\bar{x}_1\rangle+\langle -\bar{p}(r)+\bar{p}(r_1),\bar{y}-\bar{x}_1\rangle\\
&\leq \frac{1}{2\rho(r_1)}\ |\bar{p}(r_1)|\ |\bar{y}-\bar{x}_1|^2+|\bar{p}(r)-\bar{p}(r_1)|\ |\bar{y}-\bar{x_1}|.
\end{split}
\end{equation*}
From Lemmas \ref{adjoint vector} and \ref{Pos}, we have that 
\[
|\bar{p}(r_1)|\leq e^{K(r-r_1)}|\bar{p}(r)|\ ,\ |\bar{p}(r)-\bar{p}(r_1)|\leq Ke^{K(r-r_1)}(r-r_1)|\bar{p}(r)|
\]
 and 
 \[
 |\bar{y}-\bar{x}_1|\leq|\bar{y}-\bar{x}|+|\bar{x}_1-\bar{x}|\leq |\bar{y}-\bar{x}|+K_2(|\bar{x}|+1)e^{K_2(r-r_1)}(r-r_1).
 \]
 Thus, observing that $\rho(r_1)\geq\rho(r)$, one can get the estimate 
\begin{equation}\label{C2}
\langle -\bar{p}(r),\bar{y}-\bar{x}_1\rangle\ \leq\ L_1(|x|,r)|\bar{p}(r)|(|\bar{y}-\bar{x}|^2+|r-r_1|^2) 
\end{equation}
where 
\[
L_1(|x|,r)=\frac{1+K_2^2(|x|+1)^2e^{2K_2r}}{2\rho(r)}e^{Kr}+KK_2(|x|+1)e^{(K+K_2)r}+2Ke^{Kr}.
\]
We rewrite the right-most term of (\ref{C1}) as follows
\begin{eqnarray*}
\langle -\bar{p}(r),\bar{x}_1-\bar{x}\rangle &=&\Big\langle -\bar{p}(r),\int_0^{r-r_1}F_{-\bar{p}(r-s)}(\bar{x}^+(s))ds\Big\rangle\\
&=&\int_0^{r-r_1}\langle -\bar{p}(r),F_{-\bar{p}(r-s)}(\bar{x}^+(s))\rangle ds
\end{eqnarray*}
and observe that
\begin{eqnarray*}
\langle -\bar{p}(r),F_{-\bar{p}(r-s)}(\bar{x}^+(s))\rangle &=&\langle -\bar{p}(r),F_{-\bar{p}(r-s)}(\bar{x}^+(s))-F_{-\bar{p}(r)}(\bar{x}^+(s))\rangle\\
&+&\langle -\bar{p}(r),F_{-\bar{p}(r)}(\bar{x}^+(s))-F_{-\bar{p}(r)}(\bar{x})\rangle +\langle -\bar{p}(r),F_{-\bar{p}(r)}(\bar{x})\rangle.
\end{eqnarray*}
Moreover, recalling that $\lambda=H(\bar{x},-\bar{p}(r))$, we have
\[
\langle -\bar{p}(r),F_{-\bar{p}(r)}(\bar{x})\rangle = H(\bar{x},-\bar{p}(r))=\lambda,
\]
\begin{eqnarray*}
\langle -\bar{p}(r),F_{-\bar{p}(r)}(\bar{x}^+(s))-F_{-\bar{p}(r)}(\bar{x})\rangle &\leq & K\ |\bar{p}(r)|\ |\bar{x}^+(s)-\bar{x}|\\
&\leq& KK_2(|\bar{x}|+1)e^{K_2r} \ |\bar{p}(r)|\ s
\end{eqnarray*}
and
\begin{multline*}
\langle -\bar{p}(r),F_{-\bar{p}(r-s)}(\bar{x}^+(s))-F_{-\bar{p}(r)}(\bar{x}^+(s))\rangle\\ = \langle -\bar{p}(r),F_{-\bar{p}(r-s)}(\bar{x}^+(s))\rangle-H(\bar{x}^+(s),-\bar{p}(r))\\
=\langle -\bar{p}(r)+\bar{p}(r-s),F_{-\bar{p}(r-s)}(\bar{x}^+(s))\rangle
+ H(\bar{x}^+(s),-\bar{p}(r-s))-H(\bar{x}^+(s),-\bar{p}(r))\\
\leq  2K_2(|\bar{x}^+(s)|+1)\ |\bar{p}(r)-\bar{p}(r-s)|\leq 2KK_2(|\bar{x}|+1)e^{(K_2+K)r}|\bar{p}(r)|s.
\end{multline*}
Therefore,
\[
\langle -\bar{p}(r),F_{-\bar{p}(r-s)}(\bar{x}^+(s))\rangle\leq \lambda +L_2(|\bar{x}|,r)\ |\bar{p}(r)|\ s
\]
where $L_2(|\bar{x}|,r)=KK_2(|\bar{x}|+1)(2e^{Kr}+1)e^{K_2r}$. Thus, in view of the above estimates,
\begin{equation}\label{C3}
\langle -\bar{p}(r),\bar{x}_1-\bar{x}\rangle\leq \lambda (r-r_1)+\frac{L_2(|\bar{x}|,r)}{2}\ |\bar{p}(r)|\ |r-r_1|^2.
\end{equation} 
Combining (\ref{C1}), (\ref{C2}) and (\ref{C3}), we get 
\begin{equation*}
\langle -\bar{p}(r),\bar{y}-\bar{x}\rangle +\lambda(r_1-r)\leq \frac{2L_1(|\bar{x}|,r)+L_2(|\bar{x}|,r)}{2}\ |\bar{p}(r)|\ (|\bar{y}-\bar{x}|^2+|r_1-r|^2).
\end{equation*}
From Lemma \ref{Sign of Hamilton}, we have that $\lambda\geq 0$. Therefore, since $r_1<r$, we conclude that
\begin{equation}\label{IT1}
\Big\langle \frac{(-\bar{p}(r),\lambda)}{|(-\bar{p}(r),\lambda)|}\ ,\ (\bar{y}-\bar{x},\beta-r)\Big\rangle \leq \frac{2L_1(|\bar{x}|,r)+L_2(|\bar{x}|,r)}{2}\ (|\bar{y}-\bar{x}|^2+|\beta- r|^2)
\end{equation}
for all $\beta\leq r_1$. So, if $T(\bar{y})<T(\bar{x})$, then (\ref{Exterior-T}) holds true provided $\rho_T$ is such that
\begin{equation}\label{func-rho}
\rho_T\leq\frac{1}{2L_1(|\bar{x}|,r)+L_2(|\bar{x}|,r)}.
\end{equation}
\indent \textit{Second case:} $T(\bar{y})=r_1\geq r=T(\bar{x})$.\\ 
In view of Lemmas \ref{Adjoint3} and \ref{Sign of Hamilton}, we already know that (\ref{Exterior-T}) holds for all $\beta\leq r$ provided $\rho_T\leq\rho(r)$. So, we just need to prove (\ref{Exterior-T}) for $r<\beta\leq r_1$. Let $\bar{y}^+(\cdot)$ be the solution of 
\begin{equation}\label{IVP4}
 \left\{\begin{array}{ll}
\dot{\bar{y}}(s)\: \in \: F_{-\bar{p}(r)}(\bar{y}(s)) & a.e.\ s\in [0,r_1-\beta] \\
\bar{y}(0)  \: = \:  \bar{y}.
\end{array}\right.
\end{equation}
Set $\bar{y}_1=\bar{y}^+(\beta-r)$ and compute
\begin{equation}\label{C5}
\langle -\bar{p}(r),\bar{y}-\bar{x}\rangle = \langle -\bar{p}(r),\bar{y}-\bar{y}_1\rangle+ \langle -\bar{p}(r),\bar{y}_1-\bar{x}\rangle.
\end{equation}
Since $r<\beta\leq r_1$, one can see that  $T(\bar{y}_1)\ge r$. Thus, $\bar{y}_1\in\mathcal{S}'(r)$. Then, recalling Lemma \ref{Adjoint3} we get
\[
\langle -\bar{p}(r),\bar{y_1}-\bar{x}\rangle\leq\frac{1}{2\rho(r)}\ |\bar{p}(r)|\ |\bar{y}_1-\bar{x}|^2.
\]
Using Lemma \ref{Pos}, we also have
\begin{eqnarray*}
 |\bar{y}_1-\bar{x}| &\leq & |\bar{y}_1-\bar{y}|\ +\ |\bar{y}-\bar{x}|\\
 &\leq & K_2(|\bar{y}|+1)e^{K_2(\beta -r)}|\beta-r|+|\bar{y}-\bar{x}|.
 \end{eqnarray*}
So,
\begin{equation}\label{C6}
\langle -\bar{p}(r),\bar{y}_1-\bar{x}\rangle\leq \frac{K_2^2(|\bar{y}|+1)^2e^{2K_2(\beta -r)}+1}{2\rho(r)}\ |\bar{p}(r)|\ (|\bar{y}-\bar{x}|^2\ +\ |\beta-r|^2).
\end{equation}
On the other hand, recalling (\ref{global}) we have
\begin{multline*}
\langle -\bar{p}(r),\bar{y}-\bar{y_1}\rangle =\Big\langle -\bar{p}(r),\int_0^{\beta-r}-F_{-\bar{p}(r)}(\bar{y}^+(s))ds\Big\rangle =\int_0^{\beta-r}\Big\langle -\bar{p}(r),-F_{-\bar{p}(r)}(\bar{y}^+(s))\Big\rangle ds\\
=\int_0^{\beta-r}\Big\langle -\bar{p}(r),-F_{-\bar{p}(r)}(\bar{y}^+(s))+F_{-\bar{p}(r)}(\bar{x})\Big\rangle ds+\int_0^{\beta-r}\Big\langle -\bar{p}(r),-F_{-\bar{p}(r)}(\bar{x})\Big\rangle ds\\
\leq K_1\ |\bar{p}(r)|\int_0^{\beta-r}\ |\bar{y}^+(s)-\bar{x}| ds+\int_0^{\beta-r}-H(x,-\bar{p}(r))ds\\
= K_1\ |\bar{p}(r)|\int_0^{\beta-r}\ |\bar{y}^+(s)-\bar{x})| ds+\lambda(r-\beta).
\end{multline*}
Owing to Lemma \ref{Pos}, for all $s\in [0,\beta-r]$ 
\begin{eqnarray*}
 |\bar{y}^+(s)-\bar{x}| &\leq& |\bar{y}^+(s)-\bar{y}|\ +\ |\bar{y}-\bar{x}|\\
 &\leq &K_2(|\bar{y}|+1)e^{K_2(\beta -r)}|\beta-r|+|\bar{y}-\bar{x}|.
\end{eqnarray*}
Therefore,
\begin{equation}\label{C7}
\langle -\bar{p}(r),\bar{y}-\bar{y_1}\rangle\leq \lambda (r-\beta) +K_1[1+K_2(|\bar{y}|+1)e^{K_2(\beta -r)}]\ |\bar{p}(r)|\ (|\bar{y}-\bar{x}|^2+|\beta-r|^2).
\end{equation}
Combining (\ref{C5}), (\ref{C6}) and (\ref{C7}), we get
\begin{equation*}
\Big\langle \frac{(-\bar{p}(r),\lambda)}{|(-\bar{p}(r),\lambda)|}\ ,\ (\bar{y}-\bar{x},\beta-r)\Big\rangle\leq L_{3}\ (|\bar{y}-\bar{x}|^2+\ |\beta -r|^2)
\end{equation*}
where $L_3=\frac{K_2^2(|\bar{y}|+1)^2e^{2K_2(\beta -r)}+1}{2\rho(r)}+K_1[1+K_2(|\bar{y}|+1)e^{K_2(\beta -r)}]$. The dependence of $L_3$ on $|\bar{y}|$ can be easily disposed of taking 
\[
L_{4}(|\bar{x}|,r)=\frac{K_2^2(|\bar{x}|+2)^2e^{2K_2}+1}{2\rho(r)}+K_1[1+K_2(|\bar{x}|+2)e^{K_2}]+1.
\]
Then, the above inequality yields
\begin{equation}\label{C8}
\langle \frac{(-\bar{p}(r),\lambda)}{|(-\bar{p}(r),\lambda)|}\ ,\ (\bar{y}-\bar{x},\beta-r)\rangle\leq L_{4}(|\bar{x}|,r)\ (|\bar{y}-\bar{x}|^2+\ |\beta -r|^2).
\end{equation}
Recalling (\ref{func-rho}) and (\ref{C8}), and taking 
\begin{eqnarray}\label{rhoT}
\rho_T:=\Big(\max\big\lbrace{2L_1(|\bar{x}|,T(\bar{x}))+L_2(|\bar{x}|,T(\bar{x})),2L_{4}(|\bar{x}|,T(\bar{x}))\big\rbrace}\Big)^{-1},
\end{eqnarray}
we obtain (\ref{Exterior-T}). Finally, since $T(\cdot)$ is continuous on $\mathcal{O}$, one can easily see that if we set $\rho_T(\bar{x})=\rho_T$ then $\rho_T(\cdot)$ is also continuous on $\mathcal{O}$. The proof is complete.
\end{proof}


\textit{Proof of Theorem \ref{Exterior-sphere-hypo}.}
Let $\bar{x}\in\mathcal{O}$. Let $r=T(\bar{x})$ and let $\bar{x}^+(\cdot)$ be an optimal trajectory  steering $\bar{x}$ to $\mathcal{S}$ in time $r$. By the dynamic programming principle, $T(\bar{x}^+(t))=r-t$ for all $t\in (0,r)$. This implies that $\bar{x}^+(t)$ is not a local maximum of $T(\cdot)$ for all $t\in (0,r)$. Therefore, by applying Lemma \ref{Exterior}, we obtain that for all $t>0$ sufficiently small, there exists a unit vector $\bar{q}(t)\in N^P_{\mathrm{hypo}(T_{|\mathcal{O}})}(\bar{x}^+(t), T_{|\mathcal{O}}(\bar{x}^+(t)))$ realized by a ball of radius $\rho_T(\bar{x}^+(t))$ where $\rho_T(\cdot)$ is given by (\ref{rhoT}), i.e, for all $\bar{y}\in\mathcal{O}$ and $\beta\leq T(\bar{y})$
\begin{multline}\label{Cor}
\Big\langle\bar{q}(t) \ ,\  \big(\bar{y}-\bar{x}^+(t),\beta-T(\bar{x}^+(t))\big)\Big\rangle \\ \leq\frac{1}{2\rho_T({x}^+(t))}(|\bar{y}-\bar{x}^+(t)|^2+|\beta -T(x^+(t))|^2).
\end{multline}
Since $\bar{q}(t)$ is a unit vector in $\mathbb{R}^{n+1}$ for all $t>0$ sufficiently small, there exists a sequence $\lbrace{t_k\rbrace}$ which converges to $0^+$ such that the sequence $\lbrace{\bar{q}(t_k)\rbrace}$ converges to a unit vector  $\bar{q}$ in $\mathbb{R}^{n+1}$. Taking $t=t_k$ and then letting $k\rightarrow\infty$ in (\ref{Cor}), by the continuity of $T(\cdot)$ and $\rho_T(\cdot)$ in $\mathcal{O}$, we obtain that for all $\overline{y}\in\mathcal{O}$ and $\beta\leq T(\overline{y})$,
$$ \Big\langle\bar{q} \ ,\  \big(\bar{y}-\bar{x},\beta-T(\bar{x})\big)\Big\rangle \\ \leq\frac{1}{2\rho_T(\bar{x})}(|\bar{y}-\bar{x}|^2+|\beta -T(\overline{y})|^2) $$
where $\rho_T(\cdot)$ is given by (\ref{rhoT}). Therefore, $\bar{q}\in N^P_{\mathrm{hypo}(T_{|\mathcal{O}})}(\bar{x}, {T_{|\mathcal{O}}(\bar{x})})$ is realized by a ball of radius $\rho_T(T(\bar{x}))$. The proof is complete.
\qed
\\
\indent We conclude this part with an example where Petrov's controllability condition does not hold, and the minimum time function $T$ is just continuous. Moreover, multifunction $F$ admits no  $C^1$ parameterization even though $F$ and  $H$ satisfy assumptions (F) and (H). Therefore, the results in \cite{CS,CK,K} do not apply to this example while our results do.\\
\noindent \textbf{Example 1.} Set
\[
\gamma(t)=
\begin{cases}
 (1,t) & t\leq 0\\
\big(1-\sqrt{-t^2+2t},t\big) & 0\leq t\le 1\\
\big(0,t\big) & t\geq 1.
\end{cases}
\]
We set the target $\mathcal{S}$ to be the right part of $\mathbb{R}^2\backslash\lbrace{\gamma\rbrace}$ 
and the differential inclusion to be
\[
\big(\dot{x}_1(t),\dot{x}_2(t)\big)\in F\big(x_1(t),x_2(t)\big)=\Big\lbrace{\big(u_1,h(x_2(t))u_2\big)\ |\ u_1,u_2\in [0,1]\Big\rbrace},
\]
where 
\[
h(x_2)=
\begin{cases}
 0 & \mathrm{if}\ x_2\leq 1\\
x_2-1 & \mathrm{if}\ x_2\geq 1.
\end{cases}
\]
Observe first that $\mathcal{S}$ has the inner ball property. Observe furthermore that for $0<t\leq 1$, the point $z_t=\big(1-\sqrt{-t^2+2t},t\big)$ is on the boundary of $\mathcal{S}$, and  
\[
\min_{v\in F(z_t)}\langle v ,\nu\rangle=-\sqrt{-t^2+2t}\ |\nu|,
\]
where $\nu$ is the proximal vector to $\mathcal{S}$. Therefore, since $\lim_{t\rightarrow 0^+}\sqrt{-t^2+2t}=0$, one can see that Petrov's controllability condition (\ref{eq:Petrov}) does not hold in a neighborhood of $(1,0)$ . Moreover,
the minimum time function $T$  equals
\[
T(x_1,x_2)=
\begin{cases}
1-x_1 & \mathrm{if}\ x_1\leq 1, x_2\leq 0\\
1-\sqrt{-x_2^2+2x_2}-x_1 & \mathrm{if}\ x_1\leq 1-\sqrt{-x_2^2+2x_2}, 0< x_2\leq1\\
-x_1 & \mathrm{if}\ x_1\leq 0, x_2>1\,.
\end{cases}
\]
So, $T$ is continuous, but is not Lipschitz at points $(x_1,0)$ for $x_1\leq 1$.\\
\indent We next show that $F$  admits no $C^1$ parameterization. We first recall a criterion from \cite[p3]{CW}: if $F$ admits a $C^1$ parameterization, then the Hamiltonian $H$ (see (\ref{Hamiltonian})) necessarily has the property
\begin{equation}\label{criterion}
H(x,p)=-H(x,-p)\quad\Longrightarrow\quad \partial_xH(x,p)=-\partial_xH(x,-p),
\end{equation}
where $\partial_x$ denotes the Clarke partial subgradient in $x$. In this example, the Hamiltonian $H$ is computed as
\[
H\big((x_1,x_2),(p_1,p_2)\big) =
\begin{cases}
 0 &  p_1 < 0, p_2 < 0,\\
 p_1&  p_1\geq 0, p_2 < 0,\\
 h(x_2)p_2 & p_1< 0, p_2\geq 0,\\
 p_1+h(x_2)p_2 & p_1\geq 0, p_2\geq 0.
\end{cases}
\]
At the point $(x_1,x_2)=(1,1)$, one has that $H\big((1,1),(0,-1)\big)=H\big((1,1),(0,1)\big)=0$. However, 
\[
\partial_xH\big((1,1),(0,-1)\big)=(0,0)\quad\mathrm{and}\quad\partial_xH\big((1,1),(0,1)\big)=(0,[0,1])\,.
\]
Since (\ref{criterion}) is violated at the point $(1,1)$, there is no $C^1$ parameterization of $F$.

Finally, since $h$ is a convex function, one can also prove that $F$ and $H$ satisfies the assumptions (F) and (H). Therefore, by applying Theorem \ref{Exterior-sphere-hypo}, the hypograph of $T$ satisfies a $\rho_T(\cdot)$-exterior sphere condition.
 
\subsection{Part II}
\indent In this second part, we will study the attainable set $\mathcal{A}(T)$ from $0$ for the reversed differential inclusion
\begin{equation}\label{System2}
 \left\{\begin{array}{ll}
\dot{x}(t)\: \in \: -F(x(t)) & a.e.\\
x(0)  \: = \: 0.
\end{array}\right.
\end{equation}
For any $T>0$, such set is defined by 
\[
\mathcal{A}(T):=\lbrace{y(t)\ |\ t\in [0,T]\ \mathrm{and}\ y(\cdot)\ \mathrm{is\ a\ solution\ of\ (\ref{System2}})\rbrace}.
\]


Let us recall that $c_0$, $K$ and $K_1$ are the constants appearing in $(H_1)$, (\ref{H-Lipschitz}) and (\ref{global}), respectively.
\begin{Theorem}\label{ATT}
Assume $F$ satisfies (F) and (H). In addition, suppose that, for some $R>0$ and all $x\in\mathbb{R}^n$, $F(x)$ has the inner ball property of radius $R$. If $T>0$ and $e^{-3KT}>2c_0RT^2$, then the attainable set $\mathcal{A}(T)$ has the inner ball property of radius
\begin{equation}\label{R(T)}
R(T)=R\frac{\big(e^{-3KT}-2c_0RT^2\big)}{(1+KT+K_1T)^2}.
\end{equation}
\end{Theorem}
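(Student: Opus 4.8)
The plan is to run, in reverse, the argument of Part~I. In Lemmas~\ref{Adjoint3}--\ref{Exterior} the inner ball of the target $\mathcal{S}$ was transported along an optimal trajectory by means of the adjoint equation and the semiconvexity estimate of Proposition~\ref{PW}. Here the target is the single point $0$, which carries no ball, but its role is taken over by the inner ball, of radius $R$, of the velocity sets $-F(x)$: this fatness enters the attainable set through the first instants of the reversed trajectory and is then carried forward by the very same mechanism.

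More precisely, fix $\bar{x}\in\partial\mathcal{A}(T)$ and choose $\bar{t}\in(0,T]$ (say, the smallest time at which $\bar x$ is reachable) together with a trajectory $\bar{y}(\cdot)$ of \eqref{System2} with $\bar{y}(0)=0$ and $\bar{y}(\bar{t})=\bar{x}$; since $\bar{x}\in\partial\mathcal{A}(T)$, the point $\bar{x}$ necessarily lies on the boundary of the set reachable at time exactly $\bar{t}$. As in the proof of Lemma~\ref{Adjoint3}, Clarke's maximum principle (\cite[Theorem~3.5.4]{C}), together with Corollary~\ref{Div} and Lemma~\ref{adjoint vector}, produces a nowhere vanishing arc $\bar{p}(\cdot)$ on $[0,\bar{t}]$ with
\[
-\dot{\bar{p}}(s)\in\partial_xH(\bar{y}(s),-\bar{p}(s)),\qquad \dot{\bar{y}}(s)=-F_{-\bar{p}(s)}(\bar{y}(s))\qquad a.e.\ s\in[0,\bar{t}],
\]
and with a transversality condition at $s=\bar{t}$ identifying $\bar{p}(\bar{t})/|\bar{p}(\bar{t})|$, up to sign, with the outer unit normal to $\mathcal{A}(T)$ at $\bar{x}$. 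It then suffices to show that $-\bar{p}(\bar{t})$ is a proximal normal to $\overline{\mathbb{R}^n\setminus\mathcal{A}(T)}$ at $\bar{x}$ realized by a ball of radius $R(T)$, i.e.
\[
\langle -\bar{p}(\bar{t}),\,z-\bar{x}\rangle\ \le\ \frac{|\bar{p}(\bar{t})|}{2R(T)}\,|z-\bar{x}|^2\qquad\text{for every }z\in\overline{\mathbb{R}^n\setminus\mathcal{A}(T)}.
\]

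This I would prove by contradiction: if some such $z$ violated the inequality, one would reach $z$ by a trajectory of \eqref{System2} in time $\le T$. The inner ball property of $F$ gives, for each $s$, a ball $\overline{B}(a(s),R)\subseteq F(\bar{y}(s))$ tangent to $\partial F(\bar{y}(s))$ at the active velocity $F_{-\bar{p}(s)}(\bar{y}(s))$, with outer normal $-\bar{p}(s)/|\bar{p}(s)|$; hence $-F(\bar{y}(s))$ contains a ball of radius $R$ tangent to it at $\dot{\bar{y}}(s)$. On a short initial interval $[0,\delta]$ this forces the set reachable at time $\delta$ to contain a ball of radius $\approx\delta R$ tangent, at $\bar{y}(\delta)$, to the direction $-\bar{p}(\delta)$; on $[\delta,\bar{t}]$ one then runs the feedback $\dot{\zeta}(s)=-F_{-\bar{p}(s)}(\zeta(s))$ from a point of that ball, obtaining an arc $\zeta(\cdot)$. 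Comparing $\zeta$ with $\bar{y}$ exactly as in \eqref{K2}--\eqref{Internal-S(r-t)}, Proposition~\ref{PW} and the Hamiltonian inclusion give
\[
\frac{d}{ds}\langle -\bar{p}(s),\,\zeta(s)-\bar{y}(s)\rangle\ \le\ c_0\,|\bar{p}(s)|\,|\zeta(s)-\bar{y}(s)|^2,
\]
while $|\zeta(s)-\bar{y}(s)|\le e^{K_1(\bar{t}-s)}|z-\bar{x}|$ by Proposition~\ref{T-IVP} and \eqref{Lip-y}. Integrating converts the negated proximal normal inequality at $\bar{x}$ into the assertion that $\zeta(\delta)$ lies in the ball just described, whence $z=\zeta(\bar{t})$ is reachable in time $\bar{t}\le T$ --- a contradiction. (Equivalently, one may simply apply Lemma~\ref{Adjoint3} with $\mathcal{S}$ replaced by a suitable translate of $\overline{B}(0,cR\delta)$ contained in the set reachable at time $\delta$, and $\rho_0$ replaced by $cR\delta$, and then optimize over $\delta$.)

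The main obstacle is the quantitative bookkeeping that produces precisely the radius $R(T)$ of \eqref{R(T)}. One has to combine: the exponential factors $e^{-KT}$ and $e^{-K_1T}$ coming from \eqref{H-Lipschitz}, \eqref{Lip-y} and Lemma~\ref{adjoint vector}; the $O(\delta^2)$ correction (governed by the Lipschitz constant $K$ of $F$) to the radius $\delta R$ of the ball inside the set reachable at time $\delta$; the quadratic factor $(1+KT+K_1T)^2$ relating the displacement at time $\delta$ to that at time $\bar{t}$; and the semiconvexity loss $c_0\int|\bar{p}|\,|\zeta-\bar{y}|^2$, which after optimization in $\delta$ accounts for the subtracted term $2c_0RT^2$. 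The hypothesis $e^{-3KT}>2c_0RT^2$ is exactly the condition under which the surviving fatness beats this loss, i.e.\ $R(T)>0$. Apart from the arithmetic, one must keep careful track of the sign conventions for proximal normals, and treat separately the degenerate case $\bar{x}=0$ (when $0\in\partial\mathcal{A}(T)$), which follows directly from the inner ball property of $F(0)$.
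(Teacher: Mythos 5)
Your strategy is genuinely different from the paper's, and as written it has a gap at its crucial step. The paper does not split $[0,T]$ into a short initial interval plus a propagation interval and does not argue by contradiction: given $\bar x\in\partial\mathcal{A}(T)$ reached at time $T$ with the adjoint arc $\bar p(\cdot)$ from the maximum principle, it directly exhibits, for every $\theta\in B(0,1)$, an admissible trajectory of \eqref{System2} ending at $\bar x-R(T)T\big(\bar p(T)/|\bar p(T)|-\theta\big)$, namely $\bar y_\theta(s)=\bar x(s)-R(T)\,s\,\bar z(s)$, where $\bar z$ solves the transport equation $\dot{\bar z}=-\langle\dot{\bar p},\bar z\rangle\,\bar p/|\bar p|^2$ with terminal datum $\bar z(T)=\bar p(T)/|\bar p(T)|-\theta$ (so that $\langle\bar z,\bar p\rangle$ is constant and $|\dot{\bar z}|\le K|\bar z|$). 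Admissibility $\dot{\bar y}_\theta(s)\in-F(\bar y_\theta(s))$ is then verified pointwise in $s$, using that $\bar p(s)$ is an inner normal to $\partial F(\bar y_\theta(s))$ at the maximizer $F_{-\bar p(s)}(\bar y_\theta(s))$ together with Proposition \ref{PW}; the fatness of $F$ and the semiconvexity of $H(\cdot,p)$ enter simultaneously for all $s$, and the constants in \eqref{R(T)} come out of this single computation, with no auxiliary parameter to optimize.

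The gap in your version is the assertion that on $[0,\delta]$ the inner ball of $-F$ ``forces the set reachable at time $\delta$ to contain a ball of radius $\approx\delta R$ tangent, at $\bar y(\delta)$, to the direction $-\bar p(\delta)$''. That claim is a small-time instance of the very theorem being proved, and you give no argument for it: the ball of radius $R$ lives in velocity space, and turning it into a state-space ball tangent at exactly $\bar y(\delta)$ with exactly the normal $-\bar p(\delta)$ requires a construction (a Filippov-type approximation only yields a ball whose tangency point is $O(\delta^2)$ away from $\bar y(\delta)$ and whose normal is $O(\delta)$ off $-\bar p(\delta)$). These mismatches are fatal for your next step: the proximal inequality at $\bar x$ must hold for all $z$, including $z$ arbitrarily close to $\bar x$, and error terms linear in $\delta$ or in $|z-\bar x|$ cannot be absorbed into the quadratic right-hand side, while sending $\delta\to0$ destroys the radius $\delta R$; so ``optimize over $\delta$'' does not close the argument. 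Further, fixable, slips: reaching $z$ in time $\le T$ is not by itself a contradiction, since a boundary point of $\mathcal{A}(T)$ also lies in $\overline{\mathbb{R}^n\setminus\mathcal{A}(T)}$ (you must perturb $z$ into the open complement, using that the violated inequality is strict and open); the comparison arc must be solved backwards from $\zeta(\bar t)=z$, not forwards from the small ball; and the cited maximum principle provides no terminal transversality identifying $\bar p(\bar t)$ with an outer normal to $\mathcal{A}(T)$ --- that normality is the conclusion of the theorem, not an input.
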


\begin{proof}
\indent Let $\bar{x}\in\partial\mathcal{A}(T)$ and let $\bar{x}^-(\cdot)$ be a trajectory of (\ref{System2}) steering $0$ to $\bar{x}$ in time $T$ . By the Pontryagin maximum principle, there exists an arc $\bar{p}(\cdot)$ defined in $[0,T]$, with $\bar{p}(s)\neq 0$ for all $s\in [0,T]$, such that 
\begin{equation}\label{IVP5}
 \left\{\begin{array}{ll}
-\dot{\bar{p}}(s)\: \in \: \partial_xH(\bar{x}^-(s),-\bar{p}(s))\quad a.e.\ s\in [0,T] \\
\dot{\bar{x}}^-(s)=-F_{-\bar{p}(s)}(\bar{x}^-(s))\quad a.e.\ s\in [0,T].
\end{array}\right.
\end{equation}
We want to prove that, for $r_0:=R(T)$  (where $R(T)$ is defined in (\ref{R(T)})),
\begin{equation}\label{Internal-A(T)}
B\Big(\bar{x}+r_0T\frac{-\bar{p}(T)}{|\bar{p}(T)|},r_0T\Big)\subseteq \mathcal{A}(T).
\end{equation}
Equivalently, 
\[
\bar{x}-r_0T\Big(\frac{\bar{p}(T)}{|\bar{p}(T)|}-\theta\Big)\in\mathcal{A}(T)\ \mathrm{for\ all}\  \theta\in B(0,1).
\] 
Let $\theta\in B(0,1)$. Considering the adjoint equation associated with $\bar{p}(\cdot)$, that is,
\textbf{\begin{equation}\label{Adj}
 \left\{\begin{array}{ll}
\dot{\bar{z}}(s)=-\frac{\langle \dot{\bar{p}}(s),\bar{z}(s)\rangle}{|\bar{p}(s)|^2}\bar{p}(s)\ a.e. \vspace{0.5cm}\\
\bar{z}(T)=\frac{\bar{p}(T)}{|\bar{p}(T)|}-\theta,
\end{array}\right.
\end{equation}}
one can see that 
\begin{equation}\label{zero-der}
\langle\dot{\bar{z}}(s),\bar{p}(s)\rangle=-\langle\dot{\bar{p}}(s),\bar{z}(s)\rangle\ \mathrm{for\ a.e.\ }s\in[0,T].
\end{equation}
This implies that $\frac{d}{ds}\langle \bar{z}(s),\bar{p}(s)\rangle=0$ for a.e. $s\in [0,T]$. Therefore, $\langle \bar{z}(s),\bar{p}(s)\rangle$ is constant for all $s\in [0,T]$. In particular,
\begin{equation}\label{Key4}
\langle \bar{z}(s),\bar{p}(s)\rangle=\langle \bar{z}(T),\bar{p}(T)\rangle\,.
\end{equation}
On the other hand, from (\ref{Adj}) we have $|\dot{\bar{z}}(s)|\leq K|\bar{z}(s)|$. Thus, recalling Lemma \ref{adjoint vector} we obtain
\begin{equation}\label{Z-e}
e^{-K(t_2-t_1)}|\bar{z}(t_2)|\leq |\bar{z}(t_1)|\leq e^{K(t_2-t_1)}|\bar{z}(t_2)|\quad\mathrm{for\;all}\ 0\leq t_1\leq t_2\leq T.
\end{equation}
\indent Set 
\begin{equation}\label{transport}
\bar{y}_{\theta}(s)=\bar{x}(s)-r_0s\bar{z}(s),
\end{equation}
we have $\bar{y}_{\theta}(T)=\bar{x}-r_0T(\frac{\bar{p}(T)}{|\bar{p}(T)|}-\theta)$. Thus, our aim is now to prove that $\bar{y}_{\theta}(T)\in\mathcal{A}(T)$. Since $\bar{y}_{\theta}(0)=\bar{x}(0)=0$, we only need to show
 \[
 \dot{\bar{y}}_{\theta}(s)\in -F(\bar{y}_{\theta}(s))\quad\mathrm{for\ a.e.}\ s\in [0,T].
 \]
Observe that $F_{-\bar{p}(s)}(\bar{y}_{\theta}(s))\in\partial F(\bar{y}_{\theta}(s))$. Since $F(\bar{y}_{\theta}(s))$ is convex and has the inner ball property of radius $R$, we have that $\bar{p}(s)$ is an inner normal vector to $\partial F(\bar{y}_{\theta}(s))$ at the point $F_{-\bar{p}(s)}(\bar{y}_{\theta}(s))$. Thus, $-\dot{\bar{y}}_{\theta}(s)\in F(\bar{y}_{\theta}(s)) $ (equivalently, $ \dot{\bar{y}}_{\theta}(s)\in -F(\bar{y}_{\theta}(s))$) if $-\dot{\bar{y}}_{\theta}(s)\in B(F_{-\bar{p}(s)}(\bar{y}_{\theta}(s))+R\frac{\bar{p}(s)}{|\bar{p}(s)|},R)$. Therefore,\footnote{Observe that for all $R>0$ and $x\in\mathbb{R}^N$, $y\in\overline{B}(x+Rv,R)$ $\Leftrightarrow \langle v,y-x\rangle\geq\frac{1}{2R}|y-x|^2$ where $v\in\mathbb{R}^N$ is any unit vector.} our conclusion will follow from
\begin{equation}\label{key2}
\Big\langle\frac{\bar{p}(s)}{|\bar{p}(s)|},-\dot{\bar{y}}_{\theta}(s)-F_{-\bar{p}(s)}(\bar{y}_{\theta}(s))\Big\rangle\geq\frac{1}{2R}\ |\dot{\bar{y}}_{\theta}(s)+F_{-\bar{p}(s)}(\bar{y}_{\theta})(s)|^2. 
\end{equation}
Equivalently,
\begin{equation}\label{key3}
\Big\langle -\frac{\bar{p}(s)}{|\bar{p}(s)|},\dot{\bar{y}}_{\theta}(s)+F_{-\bar{p}(s)}(\bar{y}_{\theta}(s))\Big\rangle\geq\frac{1}{2R}\ |\dot{\bar{y}}_{\theta}(s)+F_{-\bar{p}(s)}(\bar{y}_{\theta})(s)|^2. 
\end{equation}
We are now going to prove (\ref{key3}). On account of (\ref{transport}), we have
\[
\dot{\bar{y}}_{\theta}(s)=-F_{-\bar{p}(s)}(\bar{x}(s))-r_0\bar{z}(s)-r_0s\dot{\bar{z}}(s).
\]
Thus, 
\begin{multline*}
\Big\langle -\frac{\bar{p}(s)}{|\bar{p}(s)|},\dot{\bar{y}}_{\theta}(s)+F_{-\bar{p}(s)}(\bar{y}_{\theta}(s))\Big\rangle\\
=\Big\langle -\frac{\bar{p}(s)}{|\bar{p}(s)|},F_{-\bar{p}(s)}(\bar{y}_{\theta}(s))-F_{-\bar{p}(s)}(\bar{x}(s))-r_0\bar{z}(s)-r_0s\dot{\bar{z}}(s)\Big\rangle\\
=\frac{1}{|\bar{p}(s)|}\ \big[H(\bar{y}_{\theta}(s),-\bar{p}(s))-H(\bar{x}(s),-\bar{p}(s))\big]\\+\frac{r_0}{|\bar{p}(s)|}\langle{\bar{p}(s),\bar{z}(s)}\rangle +r_0s\frac{1}{|\bar{p}(s)|}\langle\bar{p}(s),\dot{\bar{z}}(s)\rangle.
\end{multline*}
Recalling (\ref{zero-der}), (\ref{transport}) and (\ref{Key4}), we conclude that
\begin{multline*}
\Big\langle -\frac{\bar{p}(s)}{|\bar{p}(s)|},\dot{\bar{y}}_{\theta}(s)+F_{-\bar{p}(s)}(\bar{y}_{\theta}(s))\Big\rangle \\
=\frac{1}{|\bar{p}(s)|}\big[H(\bar{y}_{\theta}(s),-\bar{p}(s))-H(\bar{x}(s),-\bar{p}(s))-\langle -\dot{\bar{p}}(s),\bar{y}_{\theta}(s)-\bar{x}(s)\rangle\big]
+\frac{r_0}{|\bar{p}(s)|}\langle\bar{p}(T),\bar{z}(T)\rangle\\
\geq -c_0\ |\bar{y}_{\theta}(s)-\bar{x}(s)|^2+r_0\frac{|\bar{p}(T)|}{|\bar{p}(s)|}\Big\langle\frac{\bar{p}(T)}{|\bar{p}(T)|},\frac{\bar{p}(T)}{|\bar{p}(T)|}-\theta\Big\rangle\\
\geq -c_0r_0^2s^2|\bar{z}(s)|^2+\frac{r_0}{2}\frac{|\bar{p}(T)|}{|\bar{p}(s)|}\Big| \frac{\bar{p}(T)}{|\bar{p}(T)|}-\theta\Big|^2\\
=-c_0r_0^2s^2|\bar{z}(s)|^2+\frac{r_0}{2}\frac{|\bar{p}(T)|}{|\bar{p}(s)|}|\bar{z}(T)|^2.
\end{multline*}
Recalling Lemma \ref{adjoint vector} and (\ref{Z-e}), we obtain
\begin{equation}
\Big\langle -\frac{\bar{p}(s)}{|\bar{p}(s)|},\dot{\bar{y}}_{\theta}(s)+F_{-\bar{p}(s)}(\bar{y}_{\theta}(s))\Big\rangle\geq \frac{r_0}{2}(-2c_0r_0T^2+e^{-3KT})|\bar{z}(s)|^2.
\end{equation}
Observe that $0<r_0=R(T)=R\frac{(e^{-3KT}-2c_0RT^2)}{(1+KT+K_1T)^2}\leq R$. Then,   
\begin{equation}\label{Key5}
\Big\langle -\frac{\bar{p}(s)}{|\bar{p}(s)|},\dot{\bar{y}}_{\theta}(s)+F_{-\bar{p}(s)}(\bar{y}_{\theta}(s))\Big\rangle\geq \frac{r_0}{2}(-2c_0RT^2+e^{-3KT})\ |\bar{z}(s)|^2.
\end{equation}
On the other hand,
 \begin{eqnarray*}
 |\dot{\bar{y}}_{\theta}(s)+F_{-\bar{p}(s)}(\bar{y_{\theta}}(s))|&\leq & |F_{-\bar{p}(s)}(\bar{y}_{\theta}(s))-F_{-\bar{p}(s)}(\bar{x}(s))|+r_0|\bar{z}(s)|+r_0s|\dot{\bar{z}}(s)|\\
 &\leq &K_1r_0s|\bar{z}(s)|+r_0|\bar{z}(s)|+Kr_0s|\bar{z}(s)|\\
 &\leq & r_0(K_1T+KT+1) | \bar{z}(s)|. 
 \end{eqnarray*}
Thus, 
\[
\Big\langle -\frac{\bar{p}(s)}{|\bar{p}(s)|},\dot{\bar{y}}_{\theta}(s)+F_{-\bar{p}(s)}(\bar{y}_{\theta}(s))\Big\rangle\geq \frac{e^{3KT}-2c_0RT^2}{2r_0(K_1T+KT+1)^2}|\dot{\bar{y}}_{\theta}(s)+F_{-\bar{p}(s)}(\bar{y_{\theta}}(s))|^2,
\]
and (\ref{key3}) follows . The proof is complete.
 \end{proof}
 Finally, let us denote by $\mathcal{A}(x,T)$ the attainable set from $x$ in time $T$ for the differential inclusion in (\ref{System2}). One can see from Theorem \ref{ATT} that there exists a time $T_0>0$ such that for all $0<T<T_0$, the set $\mathcal{A}(x,T)$ has the inner ball property of radius $R(T)$ given by (\ref{R(T)}). Moreover, for any closed set $\mathcal{S}\subset\mathbb{R}^N$, let us set
\[
\mathcal{A}(\mathcal{S},T)\ =\bigcup_{x\in\mathcal{S}}\mathcal{A}(x,T).
\]
\begin{Corollary}\label{ATS}
Suppose that $\mathcal{S}$ is nonempty and closed. Under the assumptions in Theorem \ref{ATT}, there exists $T_0>0$ such that, for all $0<T<T_0$, then the set $\mathcal{A}(\mathcal{S},T)$ has the inner ball property of radius $R(T)$ given by (\ref{R(T)}).
\end{Corollary}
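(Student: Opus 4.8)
The plan is to deduce the corollary from Theorem~\ref{ATT} in its pointwise form, i.e.\ from the fact (recorded just above) that, for $T$ below a suitable threshold, \emph{every} single-point attainable set $\mathcal A(x,T)$, $x\in\R^n$, has the inner ball property of radius $R(T)$ given by \eqref{R(T)}; the crucial feature is that this radius depends only on $R,K,K_1,c_0,T$ and is therefore \emph{independent of $x$}. The mechanism is then elementary: a union of closed sets sharing a common inner-ball radius again has the inner ball property of that radius. The only genuinely analytic point to check is that the union $\mathcal A(\mathcal S,T)$ is closed.

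First I would fix $T_0>0$ so small that, for every $0<T<T_0$, one has both $e^{-3KT}>2c_0RT^2$ (so that Theorem~\ref{ATT} and its pointwise version apply on all of $\R^n$) and $e^{K_2T}<2$; the second inequality serves only to keep bounded the base points of a converging family of trajectories. Fix such a $T$. To prove $\mathcal A(\mathcal S,T)$ closed, I would take $z_k\to z$ with $z_k=y_k(t_k)$, $t_k\in[0,T]$, where $y_k(\cdot)$ solves \eqref{System2} with $y_k(0)=x_k\in\mathcal S$. By (F3) and Gronwall's inequality (as in Lemma~\ref{Pos}) one gets $|z_k-x_k|\le(|x_k|+1)(e^{K_2T}-1)$, whence $(2-e^{K_2T})|x_k|\le|z_k|+e^{K_2T}-1$; since $\{z_k\}$ is bounded and $2-e^{K_2T}>0$, the sequence $\{x_k\}$ is bounded. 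Along a subsequence, $x_k\to x_*\in\mathcal S$ (as $\mathcal S$ is closed) and $t_k\to t_*\in[0,T]$; the arcs $y_k(\cdot)$ are equi-Lipschitz on $[0,T]$ by the linear growth of $F$, so, by Ascoli's theorem and the standard convergence theorem for trajectories of Lipschitz convex-valued differential inclusions, a further subsequence converges uniformly to a trajectory $y_*(\cdot)$ of \eqref{System2} with $y_*(0)=x_*$. Hence $z=y_*(t_*)\in\mathcal A(x_*,T)\subseteq\mathcal A(\mathcal S,T)$, and $\mathcal A(\mathcal S,T)$ is closed.

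To conclude, I would take $\bar x\in\partial\mathcal A(\mathcal S,T)$. Since $\mathcal A(\mathcal S,T)$ is closed, $\bar x\in\mathcal A(x_0,T)$ for some $x_0\in\mathcal S$; because $\mathcal A(x_0,T)\subseteq\mathcal A(\mathcal S,T)$ we have $\mathrm{int}\,\mathcal A(x_0,T)\subseteq\mathrm{int}\,\mathcal A(\mathcal S,T)$, so $\bar x\notin\mathrm{int}\,\mathcal A(x_0,T)$, i.e.\ $\bar x\in\partial\mathcal A(x_0,T)$. Applying Theorem~\ref{ATT} to the attainable set from $x_0$ yields a unit vector $\nu$ with $B\big(\bar x+R(T)\nu,R(T)\big)\subseteq\mathcal A(x_0,T)\subseteq\mathcal A(\mathcal S,T)$. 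Writing $Q'$ for the closure of the complement of $\mathcal A(\mathcal S,T)$: this open ball lies in $\mathrm{int}\,\mathcal A(\mathcal S,T)=\R^n\setminus Q'$, so $d_{Q'}\big(\bar x+R(T)\nu\big)\ge R(T)$; but $\bar x\in Q'$ and $|\bar x-(\bar x+R(T)\nu)|=R(T)$, whence $d_{Q'}\big(\bar x+R(T)\nu\big)=R(T)$. Thus $\nu\in N^P_{Q'}(\bar x)$ is realized by a ball of radius $R(T)$, i.e.\ $\mathcal A(\mathcal S,T)$ satisfies the $R(T)$-interior sphere condition at $\bar x$; since $\bar x$ was arbitrary, $\mathcal A(\mathcal S,T)$ has the inner ball property of radius $R(T)$.

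The main obstacle I anticipate is the closedness of $\mathcal A(\mathcal S,T)$ for unbounded $\mathcal S$: one must extract the a priori bound on base points from the linear growth of $F$ and then invoke the convergence theorem for differential inclusions. Once the common, $x$-independent radius $R(T)$ delivered by Theorem~\ref{ATT} is in hand, the remainder is a short, purely topological argument.
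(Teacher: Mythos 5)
Your proof is correct and follows essentially the same route as the paper, which treats the corollary as an immediate consequence of the pointwise form of Theorem~\ref{ATT} (the radius $R(T)$ being independent of the base point $x$) combined with the union structure of $\mathcal{A}(\mathcal{S},T)$, exactly the mechanism you spell out. Your extra verification that $\mathcal{A}(\mathcal{S},T)$ is closed is sound and fills a detail the paper leaves implicit; note only that the restriction $e^{K_2T}<2$ is unnecessary, since the reverse Gronwall estimate $|z_k|+1\ge (|x_k|+1)e^{-K_2T}$ already bounds the base points for every $T$.
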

Applying Theorem~\ref{Exterior-sphere-hypo} and the above results to the minimum time function for a general  target, we obtain the following theorem together with useful corollaries.
\begin{Theorem}\label{general-target}
Assume (F), (H) and suppose $F(x)$ has the inner ball property of radius $R$ for some $R>0$ and all $x\in\mathbb{R}^n$. Suppose further that $\mathcal{S}$ is nonempty, closed and $T(\cdot)$ is continuous in a open subset $\mathcal{O}$ of $\mathcal{C}$. Then, the hypograph of $T_{|\mathcal{O}}(\cdot)$ satisfies the $\rho_T(\cdot)$-exterior sphere condition for some continuous function  $\rho_T(\cdot):\mathcal{O}\rightarrow (0,\infty)$.
\end{Theorem}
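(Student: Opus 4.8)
The plan is to reduce Theorem~\ref{general-target} to Theorem~\ref{Exterior-sphere-hypo} by replacing the target $\mathcal S$, which need not have the inner ball property, with a small sublevel set of $T$, which does have it thanks to Part~II. Fix $\bar x\in\mathcal O$ and note that $T(\bar x)>0$ (since $\mathcal O\subseteq\mathcal C$ and $\mathcal S$ is closed, $\bar x$ has positive distance from $\mathcal S$). Let $T_0>0$ be as in Corollary~\ref{ATS} and set $t=t(\bar x):=\tfrac12\min\{T(\bar x),T_0\}\in(0,T(\bar x))$. A time-reversal argument identifies $\mathcal A(\mathcal S,t)$ with the sublevel set $\mathcal S_t:=\{x:T(x)\le t\}$: reading a trajectory of $\dot z\in-F(z)$ from $\mathcal S$ to $y$ in time $s\le t$ backwards produces a trajectory of \eqref{System} from $y$ to $\mathcal S$ in time $s$, and conversely. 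Hence $\mathcal S_t$ is nonempty, closed (being a sublevel set of the lower semicontinuous function $T$), and, by Corollary~\ref{ATS}, has the inner ball property of radius $R(t)$ given by \eqref{R(T)}.

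Next I would record the dynamic programming identity for the minimum time $T_t$ associated with the new target $\mathcal S_t$, namely $T_t(x)=\max\{T(x)-t,0\}$ for all $x$: following an optimal trajectory from $x$ for time $T(x)-t$ lands in $\mathcal S_t$, because $T(\xi(s))\le T(x)-s$ along an optimal $\xi$, which gives ``$\le$''; conversely, concatenating a trajectory reaching $\mathcal S_t$ at time $s$ with a near-optimal trajectory from that point to $\mathcal S$ shows $T(x)\le s+t$, which gives ``$\ge$''. In particular, on the open set $\mathcal O_t:=\{x\in\mathcal O:T(x)>t\}$ one has $T_t=T-t$, which is continuous and finite, and $\mathcal O_t$ is contained in the controllable set associated with $\mathcal S_t$. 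Applying Theorem~\ref{Exterior-sphere-hypo} with target $\mathcal S_t$ and open set $\mathcal O_t$ produces a continuous $\widetilde\rho_t\colon\mathcal O_t\to(0,\infty)$ such that $\mathrm{hypo}(T_t|_{\mathcal O_t})$ satisfies the $\widetilde\rho_t(\cdot)$-exterior sphere condition; moreover, by the Remark after Theorem~\ref{Exterior-sphere-hypo} together with \eqref{rhoT}--\eqref{rho}, $\widetilde\rho_t(x)$ is an explicit function, jointly continuous in the base point $x$, in $T_t(x)=T(x)-t$, and in $R(t)$. Since $\mathrm{hypo}(T_t|_{\mathcal O_t})=\mathrm{hypo}(T|_{\mathcal O_t})-(0,t)$ and translations are isometries, a unit vector $w=(w_x,w_n)\in N^P_{\mathrm{hypo}(T_t|_{\mathcal O_t})}(\bar x,T_t(\bar x))$ realized by a ball of radius $\widetilde\rho_t(\bar x)$ is the same as $w\in N^P_{\mathrm{hypo}(T|_{\mathcal O_t})}(\bar x,T(\bar x))$ realized by a ball of the same radius, and $w_n\ge0$ (testing the proximal inequality against $(\bar x,T(\bar x)-\tau)$, $\tau\ge0$, shows every proximal normal to the hypograph of a finite function has nonnegative last coordinate).

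The remaining step is to upgrade $w$ to a proximal normal to $\mathrm{hypo}(T|_{\mathcal O})$, i.e.\ to make the inequality
\[
\langle w,(\bar y-\bar x,\beta-T(\bar x))\rangle\le\frac{1}{2\rho_T(\bar x)}\bigl(|\bar y-\bar x|^2+|\beta-T(\bar x)|^2\bigr)
\]
valid for all $\bar y\in\mathcal O$ and $\beta\le T(\bar y)$, not only for $\bar y\in\mathcal O_t$. For $\bar y\in\mathcal O_t$ it holds with any radius $\rho_T(\bar x)\le\widetilde\rho_t(\bar x)$. For $\bar y\in\mathcal O\setminus\mathcal O_t$ one has $\beta\le T(\bar y)\le t<T(\bar x)$, so $w_n(\beta-T(\bar x))\le0$ and the left-hand side is at most $|\bar y-\bar x|$; choosing in addition $\rho_T(\bar x)\le T(\bar x)-t$, the elementary bound $2\rho_T(\bar x)|\bar y-\bar x|\le|\bar y-\bar x|^2+(T(\bar x)-t)^2\le|\bar y-\bar x|^2+(\beta-T(\bar x))^2$ makes the right-hand side dominate. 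Thus $\rho_T(\bar x):=\min\{\widetilde\rho_{t(\bar x)}(\bar x),\,T(\bar x)-t(\bar x)\}$ works at $\bar x$, and since $t(\cdot)$, $T(\cdot)$, $R(\cdot)$ and the map $x\mapsto\widetilde\rho_{t(x)}(x)$ are all continuous, so is $\rho_T(\cdot)$ on $\mathcal O$.

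The main obstacle I anticipate is precisely this last extension. Enlarging the target to $\mathcal S_t$ forces $t=t(\bar x)$ to depend on the base point, since $\mathcal O$ may accumulate on $\mathcal S$ where $T\to0$; consequently the exterior sphere condition obtained for $T_t$ on $\mathcal O_t$ does not, by itself, cover the whole hypograph of $T|_{\mathcal O}$, and one must both shrink the realizing radius to absorb the extra test points $(\bar y,\beta)$ with $T(\bar y)\le t$ and check that the point-dependent bookkeeping leaves $\rho_T(\cdot)$ continuous. By contrast, verifying $\mathcal A(\mathcal S,t)=\{T\le t\}$ and the identity $T_t=\max\{T-t,0\}$ is routine, but should be stated with care, since it relies on the existence of optimal trajectories for \eqref{System} under hypotheses (F).
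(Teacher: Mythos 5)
Your proposal is correct and follows essentially the same route as the paper: enlarge the target to the attainable set $\mathcal{A}(\mathcal{S},t)$, which has the inner ball property by Corollary~\ref{ATS}, observe that the new minimum time equals $T-t$ on $\{T>t\}\cap\mathcal{O}$, and apply Theorem~\ref{Exterior-sphere-hypo} there. Your final step (using the nonnegativity of the last component of the proximal normal and shrinking the radius to handle test points $\bar y$ with $T(\bar y)\le t$, with a point-dependent $t(\bar x)$ and a continuity check for $\rho_T$) is a correct elaboration of the gluing argument that the paper only sketches when it passes from the sets $\mathcal{O}^t$ to all of $\mathcal{O}$.
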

\begin{proof}
We define, for any $0<t<T_0$,
\[
 \mathcal{O}^t=\lbrace{x\in\mathcal{O}\ |\ T(x)> t\rbrace},\quad \mathcal{S}^t=\mathcal{A}(\mathcal{S},t), 
 \]
 and  $T^t(\cdot)$ is the minimum time function for the differential inclusion (\ref{System}) with the target $\mathcal{S}_t$. One can see that
\[   
T^t(x)=T(x)-t\quad\mathrm{for\ all}\ x\in\mathcal{O}^t. 
\] 
Since $S^t$ has the inner ball property of radius $R(t)>0$, by applying Theorem \ref{Exterior-sphere-hypo} to $T^t(\cdot)$, we obtain that the hypograph of $T^t_{|\mathcal{O}^t}(\cdot)$ satisfies a $\rho^t_T(\cdot)$-exterior sphere condition for some continuous function  $\rho_T^t(\cdot):\mathcal{O}^t\rightarrow (0,\infty)$. Hence, the hypograph of $T_{|\mathcal{O}^t}(\cdot)$ satisfies the $\rho^t_T(\cdot)$-exterior sphere condition. Observe that, since $\mathcal{O}^{t_1}\subseteq \mathcal{O}^{t_2}\subseteq\mathcal{O}$ for $0<t_1<t_2<T_0$ and $\cup_{t\in(0,T_0)}\mathcal{O}^t=\mathcal{O}$, one can prove that $T_{|\mathcal{O}}(\cdot)$ satisfies a $\rho_T(\cdot)$-exterior sphere condition for some continuous function  $\rho_T(\cdot):\mathcal{O}\rightarrow (0,\infty)$.
\end{proof}
\begin{Corollary}
Under the assumptions of Theorem \ref{general-target}, $T_{|\mathcal{O}}(\cdot)$ satisfies properties $(1)$-$(3)$ in Proposition \ref{hypoexternal}.
\end{Corollary}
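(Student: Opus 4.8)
The plan is to derive this corollary directly from Theorem~\ref{general-target} and Proposition~\ref{hypoexternal}; there is no new analytic content to supply, only a verification that the two results can be chained. Accordingly, I would set $\Omega:=\mathcal{O}$, $N:=n$ and $f:=T_{|\mathcal{O}}$, and check the hypotheses of Proposition~\ref{hypoexternal} one by one.

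First I would note that $\mathcal{O}$, being by assumption an open subset of $\mathcal{C}\subseteq\mathbb{R}^n$, is open in $\mathbb{R}^n$; and that, since $T(x)<+\infty$ for every $x\in\mathcal{C}$, the map $f=T_{|\mathcal{O}}$ is a real-valued function on $\mathcal{O}$ with $\mathrm{dom}(f)=\mathcal{O}$, continuous there by hypothesis. Next I would invoke Theorem~\ref{general-target} --- whose standing assumptions ((F), (H), the inner ball property of radius $R$ for $F(x)$, and the continuity of $T(\cdot)$ on $\mathcal{O}$) are exactly those of the present corollary --- to obtain a continuous function $\rho_T(\cdot):\mathcal{O}\to(0,\infty)$ such that $\mathrm{hypo}(f)$ satisfies the $\rho_T(\cdot)$-exterior sphere condition, in the sense fixed just before Proposition~\ref{hypoexternal}.

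With these facts in place, Proposition~\ref{hypoexternal} applied with $\theta=\rho_T$ produces the sequence of sets $\Omega_h\subseteq\mathcal{O}$, each compact in $\mathrm{dom}(f)=\mathcal{O}$, and yields precisely properties (1)--(3): the $\Omega_h$ cover $\mathcal{O}$ up to an $\mathcal{L}^n$-null set; on $\bigcup_h\Omega_h$ the function $T$ is locally Lipschitz, hence locally semiconcave; and $T$ is $\mathcal{L}^n$-a.e. Fr\'echet differentiable on $\mathcal{O}$ and admits a second order Taylor expansion around a.e. point of $\mathcal{O}$.

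Since Theorem~\ref{general-target} has already absorbed all the work (through Theorem~\ref{Exterior-sphere-hypo}, Corollary~\ref{ATS}, and the Gronwall-type estimates of Section~\ref{Pre}), I do not expect any genuine obstacle here: the only point needing a sentence of care is that the open set $\mathcal{O}$ and the continuous radius $\rho_T(\cdot)$ returned by Theorem~\ref{general-target} match, respectively, the open set $\Omega$ and the continuous function $\theta(\cdot)$ demanded by Proposition~\ref{hypoexternal}.
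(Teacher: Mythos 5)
Your proposal is correct and is exactly the paper's (implicit) argument: the corollary is an immediate consequence of Theorem~\ref{general-target}, which provides the $\rho_T(\cdot)$-exterior sphere condition for $\mathrm{hypo}(T_{|\mathcal{O}})$ with $\rho_T(\cdot)$ continuous on the open set $\mathcal{O}$, followed by a direct application of Proposition~\ref{hypoexternal} with $\Omega=\mathcal{O}$, $f=T_{|\mathcal{O}}$ and $\theta=\rho_T$. The hypothesis checks you carry out (openness of $\mathcal{O}$, finiteness and continuity of $T$ on $\mathcal{O}$, matching of the radius function) are all that is needed, so there is no gap.
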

 \begin{Corollary}
Under the assumptions of Theorem \ref{general-target}, if $T(\cdot)$ is locally Lipschitz in $\mathcal{O}$, then $T(\cdot)$ is locally semiconcave in $\mathcal{O}$.
\end{Corollary}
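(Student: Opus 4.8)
The plan is to reduce this corollary to Theorem~\ref{general-target} exactly as Corollary~\ref{co:SCC} was reduced to Theorem~\ref{Exterior-sphere-hypo}. First I would apply Theorem~\ref{general-target} to obtain a continuous function $\rho_T(\cdot):\mathcal{O}\to(0,\infty)$ such that the hypograph of $T_{|\mathcal{O}}(\cdot)$ satisfies the $\rho_T(\cdot)$-exterior sphere condition. Fixing an arbitrary $\bar x\in\mathcal{O}$, I would then use the openness of $\mathcal{O}$ together with the local Lipschitz hypothesis to choose $\delta>0$ so small that $\overline{B}(\bar x,\delta)\subset\mathcal{O}$ and $T(\cdot)$ is Lipschitz on $B(\bar x,\delta)$ with some constant $L$; shrinking $\delta$ further if necessary, the continuity and strict positivity of $\rho_T(\cdot)$ on the compact set $\overline{B}(\bar x,\delta)$ let me assume $\rho_T\ge\rho_*>0$ throughout $B(\bar x,\delta)$, so that the hypograph of the restriction $T_{|B(\bar x,\delta)}(\cdot)$ satisfies a \emph{uniform} exterior sphere condition of radius $\rho_*$.

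The second and final step is to invoke \cite[Theorem~21]{stern2}, which asserts that a function that is Lipschitz on a ball and whose hypograph satisfies a uniform exterior sphere condition there is semiconcave on that ball, with semiconcavity constant controlled by $L$ and $\rho_*$. Applying this on $B(\bar x,\delta)$ yields semiconcavity of $T(\cdot)$ near $\bar x$, and since $\bar x$ was arbitrary this is precisely the local semiconcavity of $T(\cdot)$ in $\mathcal{O}$.

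I do not expect a genuine obstacle: the statement is a routine combination of Theorem~\ref{general-target} with a cited result, and it parallels Corollary~\ref{co:SCC} verbatim once ``inner ball property of $\mathcal{S}$'' is replaced by ``inner ball property of $F$.'' The only point deserving a line of justification is the localization, since Theorem~\ref{general-target} delivers a state-dependent radius $\rho_T(\cdot)$ and one must use its continuity to extract a uniform lower bound on a small ball before \cite[Theorem~21]{stern2} becomes applicable; everything else is immediate, the Lipschitz hypothesis being exactly the ingredient that promotes the $\varphi$-convexity of $\mathrm{hypo}(T_{|\mathcal{O}})$ to the linear form of semiconcavity asserted.
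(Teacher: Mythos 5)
Your proposal is correct and follows essentially the same route as the paper, which obtains this corollary (just as Corollary~\ref{co:SCC}) by combining the exterior sphere condition for the hypograph of $T_{|\mathcal{O}}(\cdot)$ from Theorem~\ref{general-target} with the local Lipschitz hypothesis via \cite[Theorem 21]{stern2}. Your extra remark on extracting a uniform lower bound for $\rho_T(\cdot)$ on a small ball is a sensible clarification of the localization but does not change the argument.
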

 
\begin{center}
\sc Acknowledgements
\end{center}
The authors are grateful to the anonymous referees for their careful reading of the manuscript, and for their comments which highly improved the quality of this paper.

\end{document}